\def\Z{\mathbb{Z}}
\def\Fix{\rm{FIX}}
\newtheorem{thm}{Theorem}
\newtheorem{lem}[thm]{Lemma}
\newtheorem{lemma}[thm]{Lemma}
\newtheorem{cor}[thm]{Corollary}
\newtheorem{prop}[thm]{Proposition}
\newtheorem{defn}[thm]{Definition}
\newtheorem{rmk}[thm]{Remark}
\numberwithin{equation}{section}
\def\sii{if and only if }
\def\ksc{{$k$-circular succession}}
\def\kscs{$k$-circular successions}
\def\ksl{$k$-linear succession}
\def\ksls{$k$-linear successions}
\def\ksl2{$k$-linear succession of second king }
\def\ksl2s{$k$-linear successions of second kind }
\def\phib{\Phi }
\def\si{\pi }
\def\siep'{\si'=(\varepsilon',\sigma')}
\def\sig'{sign_{\si'}}
\def\sig{sign_{\si }}
\def\sign{\textrm{sgn}}
\def\phi1b{\varphi^b }
\def\phid{\delta}
\title{Derangements and Euler's difference table for $C_\ell\wr S_n$}
\author{\small\sc{Hilarion L. M. Faliharimalala$^{1}$ and Jiang Zeng$^{2}$} \\[-0.8ex]
\\[-0.8ex]
\small $^1$D\'epartement de Math\'ematiques et Informatique\\ [-0.8ex]
 \small Universit\'e d'Antananarivo, 101 Antananarivo, Madagascar\\[-0.8ex]
 \small\texttt{hilarion@ist-tana.mg}\\[-0.8ex]
\\[-0.8ex]
 \small $^2$Universit\'e de Lyon, Universit\'e Lyon 1,\\[-0.8ex]
\small Institut Camille Jordan, UMR 5208 du CNRS, \\[-0.8ex]
\small F-69622, Villeurbanne Cedex, France\\[-0.8ex]
\small\texttt{zeng@math.univ-lyon1.fr}}
\date{}
\begin{document}
\maketitle

\begin{abstract}
Euler's difference table associated to the sequence $\{n!\}$ leads
 naturally to the counting formula for the derangements.
 In this paper we study
Euler's difference table associated to the sequence
$\{\ell^n n!\}$ and  the generalized derangement problem.
For the coefficients appearing in the later table
we will give
the combinatorial interpretations in terms of two kinds of $k$-successions of
the group $C_\ell\wr S_n$.
In particular  for $\ell=1$ we recover
the known results for
the symmetric groups  while for
$\ell=2$ we obtain  the corresponding results for the
 hyperoctahedral groups.
\end{abstract}
\section{Introduction}
The \emph{probl\`eme de rencontres}  in classical combinatorics consists in
 counting permutations without fixed points (see \cite[p. 9--12]{Co}). 
 On the other hand one finds in the works of Euler (see \cite{DR}) the following table 
of differences:
$$
g_{n}^n=n!\quad \text{and}\quad 
g_{n}^m=g_{n}^{m+1}-g_{n-1}^m \quad (0\leq m\leq n-1).
$$
Clearly this table  leads
 naturally to an explicit formula for $g_{n}^0$, which corresponds to 
 the number of derangements of $[n]=\{1,\ldots, n\}$.
As $n!$ is the cardinality of the symmetric group of $[n]$,
 Euler's
difference table   can be considered to be an array associated to the symmetric group.

In the last two decades much effort has been made to extend
various enumerative results on symmetric groups
 to other Coxeter groups,   the wreath product
of a cyclic group with a symmetric group, and  more generally to complex reflection groups.
The reader is referred to
 \cite{ar01,ABR,CG,Ch, FH,HLR,BG,BR,BB} and the references cited there for
  the recent works in this
 direction.
 
In this paper we shall consider
 the \emph{probl\`eme de rencontres} in the group
$C_\ell\wr S_n$ via Euler's difference table. 
For a fixed integer $\ell\geq 1$,  we define Euler's difference table
for $C_\ell\wr S_n$to be 
 the array
  $(g_{\ell,n}^m)_{n,\,m\geq 0}$ defined by
\begin{align}
\left\{%
\begin{array}{ll}
    g_{\ell,n}^n=\ell^n\, n!& \hbox{$(m=n)$;} \\
  g_{\ell,n}^m= g_{\ell,n}^{m+1}-g_{\ell,n-1}^m &
  \hbox{$(0\leq m\leq n-1)$.}
\end{array}%
\right.\label{eulerB}
\end{align}
The first values of these numbers for
$\ell=1$ and $\ell=2$  are given in Table 1.

 \begin{table}[h]
$$\vcenter{
\hbox{$\begin{array}{c|cccccc}
\hbox{$n$}\backslash\hbox{$m$}&0&1&2&3&4&5\\
\hline
0&1& & & & &\\
1&0&1!& & &&\\
2&1&1&2!& &&\\
3&2& 3& 4 &3!&&\\
4&9&11&14&18&4!&\\
5&44&53&64&78&96&5!\\
\end{array}$}
\smallskip
\hbox{\hskip 3cm $(g_{1,n}^m)$}}
\qquad
\vcenter{
\hbox{$\begin{array}{c|cccccc}
\hbox{$n$}\backslash\hbox{$m$}&0&1&2&3&4&5\\
\hline
0  &1&\\
1    & 1&2^1 \,1!&\\
2    &5&6&2^2 \,2!\\
3   &29&34&40&2^3\,3!&\\
4    &233&262&296&336&2^4\,4!&\\
5&2329&2562&2824&3120&3456&2^5\,5!\\
\end{array}$}
\smallskip
\hbox{\hskip 3cm $(g_{2,n}^m)$}
}$$
\caption{Values of $g_{\ell,n}^m$ for $0\leq m\leq n\leq 5$ and $\ell=1$ or 2.\label{t:g}}
\end{table}

The $\ell=1$ case of (1.1) corresponds to Euler's difference
table, where
$g_{1,n}^n$ is the cardinality of $S_n$ and $g_{1,n}^0$ is the
number of \emph{derangements}, i.e., the fixed point free
permutations in $S_n$. The combinatorial interpretation for
 the  general coefficients $g_{1,n}^m$ was first studied by
Dumont and Randrianarivony~\cite{DR} and then by Clarke et al
\cite{CHZ}. More recently Rakotondrajao~\cite{Ra1,Ra2} has given
further combinatorial interpretations of these coefficients in
terms of $k$-\emph{successions} in symmetric groups.

As $g_{2,n}^n=2^nn!$ is the cardinality of the hyperoctahedral group $B_n$,
Chow~\cite{Ch} has given a  similar interpretation for $g_{2,n}^0$
in terms of derangements  in the hyperoctahedral groups.

For positive integers $\ell$ and $n$
the group of \emph{colored permutations}
of $n$ digits with $\ell$ colors is the wreath product
$G_{\ell,n}=C_\ell\wr S_n=C_\ell^n\rtimes S_n$,
  where
$C_\ell$ is the
$\ell$-cyclic group generated by $\zeta=e^{2i\pi/\ell}$
 and $S_n$  is the symmetric group of the set $[n]$.
By definition,
the multiplication in $G_{\ell,n}$, consisting of  pairs $(\epsilon,\sigma)\in C_\ell^n\times S_n$,
  is given by the following rule:
  for all $\si=(\epsilon,\sigma)$ and $\si'=(\epsilon',\sigma')$ in $G_{\ell,n}$,
 $$(\epsilon,\sigma)\cdot(\epsilon',\sigma')=
 ((\epsilon_1\epsilon'_{\sigma^{-1}(1)},
 \epsilon_2\epsilon'_{\sigma^{-1}(2)},\ldots,
  \epsilon_n\epsilon'_{\sigma^{-1}(n)}), \,\sigma\circ\sigma').
  $$
 One can identify $G_{\ell,n}$ with a
  permutation group of the colored set:
$$
\Sigma_{\ell,n}:=C_{\ell}\times [n]=\{\zeta^{j}i \, | \,  i\in[n], 0 \leq j\leq \ell-1\}
$$
via the morphism
$(\epsilon, \sigma)\longmapsto \pi$ such that   for any $i\in [n]$ and $0\leq j\leq \ell-1$,
$$
\pi(i)=\epsilon_{\sigma(i)}\sigma(i)\qquad\textrm{and}\qquad  \si(\zeta^j\, i)=\zeta^j\si(i).
$$
Clearly the cardinality of $G_{\ell,n}$ equals $\ell^n n!$.

 We can write a signed permutation $\pi\in G_{\ell,n}$ in two-line notation.
For example, if $\pi=(\epsilon, \sigma)\in G_{4,11}$, where
$\epsilon=(\zeta^2,1,1,\zeta,\zeta^2,\zeta, \zeta,\zeta,1,\zeta,\zeta^3)$ and
$$
\sigma=3\quad 5 \quad 1\quad  9\quad  6\quad  2\quad   7\quad  4\quad  11\quad  8\quad  10,
$$
we write
\begin{align*}
\pi=\left(%
\begin{array}{ccccccccccc}
  1 & 2 & 3 & 4 & 5 & 6 & 7 & 8 & 9 & 10 & 11 \\
  3 & \zeta^25 & \zeta^21 & 9 & \zeta 6 & 2 & \zeta7 & \zeta4 & \zeta^311 & \zeta 8 & \zeta 10 \\
\end{array}%
\right).
\end{align*}
For small $j$,  it is convenient to  write $j$ bars over $i$ instead of $\zeta^j i$.
Thus, the above permutation can be written  in one-line form as
$\pi=3 \;\; \bar{\bar 5} \;\; \bar{\bar{1}} \;\; 9 \;\; \bar 6 \;\; 2 \;\; \bar7 \;\;
 \bar 4 \;\; \overline{\overline{\overline{11}}} \;\; \bar8 \;\; \overline{10}$,
or in cyclic notation as
\begin{align*}
\pi=(\overline{\overline{1}}, \;3)\;(2,\;
\overline{\overline{5}},\; \overline{6})\; (\overline{4},\;9,\;
\overline{\overline{\overline{11}}},\;\overline{10},\;\overline{8})\;(\overline{7}).
\end{align*}
Note that when using cyclic notation to determine the image of a number, one ignores the sign on that number and then considers only the sign on the next number in the cycle. Thus,
in this example, we ignore the sign $\zeta^2$ on the 5 and note that then 5 maps to $\zeta6$ since the sign on 6 is $\zeta$.
 Furthermore, throughout this paper we shall use the following
 conventions:
\begin{itemize}
\item[i)] If $\si=(\epsilon,\sigma)\in G_{\ell,n}$,
let  $|\pi|=\sigma$  and $\sign_{\si}(i)=\epsilon_{i}$ for $i\in
[n]$. For example, if $\si= \bar{\bar{4}}\,\bar{3} \,1 \,\bar{2}$
 then $\epsilon=(1,\zeta,\zeta,\zeta^2)$ and $\sign_{\si}(4)=\zeta^2$.
\item[ii)] For  $i \in [n]$ and $ j
\in \{0,1,\ldots,\ell-1\}$ define
${\zeta^j}i+k={\zeta^j}(i+k)$ for  $ 0\leq k\leq n-i$, and
 $ {\zeta^j}i-k={\zeta^j}(i-k)$ for $0\leq k\leq i$.
For example,  we have $\bar{\bar{4}}+1=\bar{\bar{5}}$ in
$G_{4,11}$.
\item[iii)] We use the following total order on $\Sigma_{\ell,n}$:
for $i,j\in [\ell]$ and $a,b\in [n]$,
$$
\zeta^ia < \zeta^jb \Longleftrightarrow [i>j]\quad  \text{or}\quad
[i=j \text{ and } a<b].
$$
\end{itemize}

It is not hard to see that the coefficient $g_{\ell,n}^m$ is
divisible by $\ell^mm!$. This prompted us to introduce
 $d_{\ell,n}^m=g_{\ell,n}^m/\ell^mm!$. We derive then  from (1.1)
the following
allied array  $(d_{\ell,n}^m)_{n,\,m\geq 0}$:
\begin{align}
\left\{%
\begin{array}{ll}
    d_{\ell,n}^n=1& \hbox{$(m=n)$;} \\
  d_{\ell,n}^m= \ell(m+1)\, d_{\ell,n}^{m+1}-d_{\ell,n-1}^m&
  \hbox{$(0\leq m\leq n-1)$.}
\end{array}%
\right.\label{deulerB}
\end{align}
The first terms of these coefficients for
$\ell=1, 2$ are given in Table 2.

 \begin{table}[h]
$$
\vcenter{\hbox{
$\begin{array}{c|cccccccc}
\hbox{$n$}\backslash\hbox{$m$}&0&1&2&3&4&5\\
\hline
0  &1&\\
1  &0&1&\\
2  &1&1&1&\\
3  &2&3&2&1&\\
4  &9&11&7&3&1&\\
5  &44&53&32&13&4&1&
\end{array}$}
\smallskip
\hbox{\hskip 3cm $(d_{1,n}^m)$}
}
\qquad
\vcenter{
\hbox{$\begin{array}{c|cccccccccc}
\hbox{$n$}\backslash\hbox{$m$}&0&1&2&3&4&5\\
\hline
0& 1& & & & &\\
1& 1&1& & &&\\
2& 5&3&1& &&\\
3& 29&17&5&1&&&\\
4& 233&131&37&7&1&&&\\
5& 2329&1281&353&65&9&1&\\
\end{array}$}
\smallskip
\hbox{\hskip 3cm $(d_{2,n}^m)$}
}
$$
\caption{Values of $d_{\ell,n}^m$ for $0\leq m\leq n\leq 5$ and $\ell=1$ or 2.\label{t:g}}
\end{table}

One can find the $\ell=1$ case of \eqref{deulerB} and the table
$(d_{1,n}^m)$  in Riordan's
book~\cite[p. 188]{Ri}.  Recently Rakotondrajao~\cite{Ra2} has given a
combinatorial interpretation for the coefficients $d_{1,n}^m$ in
the symmetric group $S_n$.

The aim of this paper is
to study the coefficients $g_{\ell,n}^m$ and
 $d_{\ell,n}^m$ in the colored group $G_{\ell,n}$, i.e.,
 the wreath product of a cyclic group and a
symmetric group. This paper merges from the two papers \cite{CHZ} and
\cite{Ra2}. In the same vein as in \cite{CHZ}
we will give a $q$-version of \eqref{eulerB} in a forthcoming paper.

\section{Main results}

We first generalize the notion of $k$-succession introduced by Rakotondrajao~\cite{Ra2} in the symmetric group to $G_{\ell,n}$.
 \begin{defn}[$k$-circular succession]
Given  a permutation $\si\in G_{\ell,n}$ and a nonnegative integer $k $, the value $\pi(i)$ is
 a $k$-circular succession at position $i \in [n]$ if $\si(i)=i+k$.
  In particular a 0-circular succession is also called fixed point.
\end{defn}

\begin{rmk}
Some words are in order about the requirement $\si(i)=i+k$ in this definition.
 The ``wraparound`` is not allowed, i.e., $i+k$ is not to be interpreted $\mod n$,  also
$i+k$ needs to be uncolored, i.e., 
 $i+k\in [n]$,
 in order to count as a $k$-circular succession.
\end{rmk}

Denote by  $\mathcal{C}^k(\si)$ the set of  {\kscs} of $\si$ and let
 $c^k(\si)= \# \ \mathcal{C}^k(\si)$.
In particular $FIX(\si)$ denotes the set of fixed points of $\si$.
For example, for the permutation
$$ \si= \left ( \begin{array}{llllllllllllll}
1&2&3&4&5&6&7&8&9\\
\bar{1}&5 &\bar{\bar{9}}&\bar{6}&8 &\bar{7} &\bar{\bar{\bar{3}}}
&\bar{\bar{4}} &\bar{2}
\end{array}\right)\in G_{4,9},
$$
the values 5 and 8 are the two $3$-circular successions
at positions  2 and 5. Thus $C^3(\pi)=\{5, 8\}$.

The following is our main result on the combinatorial interpretation of the coefficients
$g_{\ell,n}^m$ in terms of $k$-circular successions.
\begin{thm}\label{succession majore} For any integer
 $k$ such that $0\leq k\leq m$, the entry $g_{\ell,n}^m$
equals the number of permutations in $G_{\ell,n}$ whose
 $k$-circular successions are included in $[m]$.
 In particular, by taking $k=0$ and $k=m$, respectively, either of the
 following holds.
 \begin{itemize}
\item[(i)] The entry  $g_{\ell,n}^m$ is the number of
   permutations in $G_{\ell,n}$ whose fixed points are included in $[m]$.
\item[(ii)] The entry  $g_{\ell,n}^m$ is the number of  permutations in $G_{\ell,n}$
without $m$-circular succession.
\end{itemize}
\end{thm}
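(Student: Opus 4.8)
The plan is to fix $k$ and to show that the counting function
$$A(n,m) := \#\{\pi\in G_{\ell,n}:\mathcal{C}^k(\pi)\subseteq[m]\}$$
satisfies the same boundary condition and the same recurrence as $g_{\ell,n}^m$ throughout the range $k\le m\le n$. Since the diagonal values together with \eqref{eulerB} determine the whole array uniquely (to compute an entry one descends in $m$ and drops $n$ by one, and the region $k\le m\le n$ is closed under these dependencies), this forces $A(n,m)=g_{\ell,n}^m$ for every $k$ with $0\le k\le m$; the two displayed items then follow by specializing $k=0$ and $k=m$. Disposing of the boundary is immediate: when $m=n$, any $k$-circular succession has value $i+k\le n$ by definition (no wraparound, $i+k\in[n]$), so the condition $\mathcal{C}^k(\pi)\subseteq[n]$ is vacuous and $A(n,n)=|G_{\ell,n}|=\ell^n n!=g_{\ell,n}^n$.

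The heart of the argument is the recurrence, which I would establish in the additive form $A(n,m+1)=A(n,m)+A(n-1,m)$ for $k\le m\le n-1$. The permutations counted by $A(n,m+1)$ split according to whether the value $m+1$ is itself a $k$-circular succession: those in which it is not satisfy $\mathcal{C}^k(\pi)\subseteq[m]$ and are counted by $A(n,m)$; those in which it is have a succession at position $p:=m+1-k$ (so $\pi(p)=m+1$, uncolored) and, because all successions already lie in $[m+1]$, have no succession of value exceeding $m+1$. I would biject this second family with $\{\pi'\in G_{\ell,n-1}:\mathcal{C}^k(\pi')\subseteq[m]\}$ by deleting the position $p$ together with the value $m+1$ and relabelling order-preservingly: positions $>p$ and values $>m+1$ are each shifted down by one (every value carrying its color along), while positions $<p$ and values $<m+1$ are left unchanged. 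The inverse re-inserts the uncolored value $m+1$ at position $p$.

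The main obstacle is verifying that this relabelling matches the succession sets exactly, that is, that $\mathcal{C}^k(\pi)\setminus\{m+1\}$ corresponds bijectively to $\mathcal{C}^k(\pi')$ with no spurious successions created. I would first observe that every surviving succession of $\pi$ has value $\le m$, hence position $\le m-k<p$, so it sits below the deletion and passes untouched to a succession of $\pi'$ of the same value $\le m$; this also shows $\mathcal{C}^k(\pi')\subseteq[m]$. The delicate part is the converse: a position $i\neq p$ that is not a $k$-circular succession of $\pi$ must be checked not to become one after the shift, and a position might a priori turn a $(k-1)$- or $(k+1)$-succession into a $k$-succession. This reduces to a short case analysis according to whether the value at $i$ lies below or above $m+1$; in each case the inequality $i>p$ (for shifted positions) or $i<p$ (for fixed positions), combined with the standing hypothesis that $\pi$ has no succession of value $>m+1$, contradicts the putative succession. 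Tracking the color index is exactly what lets the ``no succession of value $>m+1$'' hypothesis do its work, since a succession requires the value to be uncolored and colors are transported by the value-relabelling. Once this verification is complete, $A(n,m)$ and $g_{\ell,n}^m$ agree on the whole range, which proves the theorem.
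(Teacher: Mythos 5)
Your proposal is correct and takes essentially the same approach as the paper: both arguments check the diagonal boundary $A(n,n)=\ell^n n!$ and establish the recurrence $A(n,m+1)=A(n,m)+A(n-1,m)$ by splitting off the permutations in which $m+1$ is a $k$-circular succession and bijecting them with $G_{\ell,n-1}$ via deletion of the value $m+1$ at position $m+1-k$ together with the order-preserving downward shift of larger values. The only difference is one of detail, not of substance: you spell out the case analysis showing no spurious successions are created, which the paper compresses into the single observation that $\pi'$ has a $k$-circular succession $j\geq m+1$ if and only if $j+1$ is a $k$-circular succession of $\pi$.
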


For example, the  permutations in $G_{2,2}$ whose fixed points are
included in $[1]$ are:
$$
21, \quad 1\bar{2}, \quad \bar{2}1,\quad 2\bar{1},\quad
\bar{1}\bar{2},\quad \bar{2}\bar{1};
$$
while those  without $1$-circular succession  are:
$$
12, \quad \bar{1}2,\quad 1\bar{2},\quad \bar{1}\bar{2},\quad \bar{2}1,\quad \bar{2}\bar{1}.
$$

Note that Dumont and Randrianarivony~\cite{DR} proved the   $\ell=1$
case of (i), while Rakotondrajao~\cite{Ra2} proved the $\ell=1$
case of (ii).

Let $c_{\ell,n,m}^k$ be the number of colored permutations in $G_{\ell,n}$  with
 $m$ $k$-circular successions.

\begin{thm}\label{thm pol} Let $ n$, $k$ and $m$ be integers such that   $n\geq 1$, $k \geq 0$
 and  $m \geq 0$.
Then
\begin{align}
c_{\ell,n+1,m}^{k+1}&=c_{\ell,n+1,m}^k+c_{\ell,n,m}^k-c_{\ell,n,m-1}^k,
\label{relation1}
\end{align}
where $c_{\ell,n,-1}^k=0$.
 \end{thm}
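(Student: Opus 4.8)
The plan is to establish the equivalent subtraction-free identity
$$c_{\ell,n+1,m}^{k+1}+c_{\ell,n,m-1}^k=c_{\ell,n+1,m}^k+c_{\ell,n,m}^k$$
by exhibiting on $G_{\ell,n+1}$ one global bijection that converts $(k+1)$-circular successions into $k$-circular successions up to a single controlled exception, together with a deletion bijection down to $G_{\ell,n}$. Throughout I assume $k\le n$, so that the position $n+1-k$ used below is meaningful (this is the only range in which the statement is nondegenerate).

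First I would introduce the value down-shift $\rho\colon\Sigma_{\ell,n+1}\to\Sigma_{\ell,n+1}$ defined by $\rho(\zeta^j x)=\zeta^j(x-1)$ for $2\le x\le n+1$ and $\rho(\zeta^j\cdot 1)=\zeta^j(n+1)$. Since $\rho$ preserves colours and cyclically permutes absolute values, the composite $\rho\circ\pi$ is again an element of $G_{\ell,n+1}$, and $\pi\mapsto\rho\circ\pi$ is a bijection of $G_{\ell,n+1}$. The key local computation is to track successions: $\rho\circ\pi$ has a $k$-circular succession at position $i$ exactly when $\rho(\pi(i))=i+k$ is uncoloured, and this happens in two disjoint situations, namely either $\pi(i)=i+k+1$ is uncoloured (a $(k+1)$-circular succession of $\pi$), or $\pi(i)=1$ is uncoloured and $i=n+1-k$ (the wrap-around of $\rho$, which sends $1$ to $n+1=i+k$). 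Writing $\chi(\pi)=1$ if $\pi(n+1-k)=1$ is uncoloured and $\chi(\pi)=0$ otherwise, this yields the exact count relation
$$c^k(\rho\circ\pi)=c^{k+1}(\pi)+\chi(\pi).$$

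Next I would use this to split the enumeration. Because $\pi\mapsto\rho\circ\pi$ is a bijection, summing over fibres gives $c_{\ell,n+1,m}^k=a_m+b_{m-1}$ and $c_{\ell,n+1,m}^{k+1}=a_m+b_m$, where $a_m$ (respectively $b_m$) counts those $\pi$ with $\chi(\pi)=0$ (respectively $\chi(\pi)=1$) and $c^{k+1}(\pi)=m$. Subtracting cancels $a_m$ and reduces the theorem to the identity $b_m=c_{\ell,n,m}^k$; that is, the permutations $\pi\in G_{\ell,n+1}$ with $\pi(n+1-k)=1$ uncoloured and $c^{k+1}(\pi)=m$ must be equinumerous with the permutations of $G_{\ell,n}$ having $m$ $k$-circular successions. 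For this I would delete the forced value $1$ together with the position $n+1-k$: relabel the surviving values $v\ge 2$ (keeping their colours) by $v\mapsto v-1$ and the surviving positions $p>n+1-k$ by $p\mapsto p-1$, producing $\bar\pi\in G_{\ell,n}$. One checks this is a bijection (reinsert an uncoloured $1$ at position $n+1-k$ to invert it) and that it matches successions position by position: a $(k+1)$-succession $\pi(i)=i+k+1$ forces $i\le n-k<n+1-k$, so relabelling turns it into $\bar\pi(i)=i+k$, and conversely every $k$-succession of $\bar\pi$ occupies a position $\le n-k$ and lifts back. Hence $c^{k+1}(\pi)=c^k(\bar\pi)$ and $b_m=c_{\ell,n,m}^k$, which finishes the proof.

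The main obstacle, and the step requiring the most care, is the succession bookkeeping under these two maps: I must verify that $\rho$ neither creates nor destroys successions except through the single wrap-around value, and that the deletion map loses no succession and introduces no spurious one. This hinges on the ``no wrap-around'' and ``uncoloured'' requirements in the definition of a $k$-circular succession, and in particular on the range observation that $k$-successions of a permutation of $[n]$ can only occupy positions $\le n-k$; it is precisely this that pins the exceptional position to $n+1-k$ and keeps both fibre correspondences clean.
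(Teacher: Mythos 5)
Your proof is correct and takes essentially the same route as the paper's own first proof, just transposed from positions to values: the paper composes with the rotation $\delta$ (last letter moved to the front), under which $(k+1)$-circular successions become $k$-circular successions except when the letter $k+1$ occupies the exceptional (first) position, and it then removes that letter by the same delete-and-relabel bijection onto $G_{\ell,n}$ that you use, your fibre counts $a_m$, $b_m$ being precisely the coefficient-level form of the paper's manipulation of the counting polynomials, $S_{n+1}^{k+1}(x)=S_{n+1}^{k}(x)+(1-x)S_{n}^{k}(x)$. Your explicit restriction to $k\le n$ is not a defect but a point of care: the identity genuinely fails for $k>n$ (for $m=0$ the left side is $\ell^{n+1}(n+1)!$ while the right side is $\ell^{n+1}(n+1)!+\ell^{n}n!$), so the paper's unrestricted statement implicitly assumes this range as well; the paper also records a second, non-bijective proof via the identity $c_{\ell,n,m}^k=\binom{n-k}{m}g_{\ell,n-m}^k$ and Theorem 2, which your argument does not need.
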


\begin{defn} [$k$-linear succession]
For $\si\in G_{\ell,n}$,  the value
 $|\pi(i)|$ $(2 \leq  i \leq n)$  is a
 $k$-linear succession $(k\geq 1)$
of $\si$ at position  $i$ if $\pi(i)=\pi(i-1)+k$.
\end{defn}
Denote by  $\mathcal{L}^k(\si)$ the set of $k$-linear successions of $\si$
and let
 $l^k(\si)= \# \mathcal{L}^k(\si)$.
Let $l_{\ell,n,m}^k$ be the number of colored permutations in
$G_{\ell,n}$  with
 $m$ $k$-linear successions.
For example, 9 and  3 are the two 2-linear successions of the
permutation
$\si=\bar{5}\,\bar{2} \,4 \,7 \,9 \,\bar{1} \,\bar{3}
\,\bar{\bar{8}} \,6\in G_{4,9}$.

\begin{defn} [Skew $k$-linear succession]
For $\si=(\varepsilon,\sigma) \in G_{\ell,n}$, the value
 $\sigma(i)$     $(1 \leq  i \leq n) $   is a
 skew $k$-linear succession $(k\geq 1)$
of $\si$ at position  $i$ if
$$
\pi(i)=\pi(i-1)+k,
$$
where, by convention, $\sigma(0)=0$ and $\varepsilon(0)=1$.
\end{defn}
 Denote
by  $\mathcal{L}^{*k}(\si)$ the set of skew $k$-linear successions of $\si$
and
 $l^{*k}(\si)= \# \mathcal{L}^{*k}(\si)$. The number of permutations in $G_{\ell,n}$
 with
 $m$ skew $k$-linear successions is  $l_{\ell,n,m}^{*k}$.
Obviously  we have the following relation:
 \begin{align}\label{type1 type2}
\mathcal{L}^{*k}(\si)=\left\{%
\begin{array}{ll}
   \mathcal{L}^{k}(\si), & \hbox{if $\si(1)\not= k$;} \\
 \mathcal{L}^{k}(\si)\cup\{k\}, & \hbox{otherwise.} \\
\end{array}%
\right.
\end{align}

Let $\phid$ be the bijection  from  $G_{\ell,n}$ onto itself
defined by:
\begin{align}\label{defn:delta}
\si=\si_1\;\si_2 \; \cdots\, \si_n   \longmapsto \delta(\si)=\si_n
\; \si_1\;\si_2  \;\cdots \; \si_{n-1}.
\end{align}

\begin{thm}\label{phib} For any integer $k\geq 0$ there is a bijection
$\Phi$ from $G_{\ell,n}$ onto itself such that for $ \pi\in
G_{\ell,n}$,
\begin{equation}\label{prop1}
  \mathcal{C}^{k+1}(\si)=\mathcal{L}^{k+1}( \Phi(\pi) ),
  \end{equation}
and
\begin{equation}\label{prop0}
\mathcal{C}^k(\delta(\si))=\mathcal{L}^{*(k+1)}(\phib(\si)).
  \end{equation}
\end{thm}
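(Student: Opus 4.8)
The plan is to exhibit $\Phi$ explicitly as a Foata-type ``cycle-to-word'' map, tailored so that consecutive entries of a cycle become consecutive letters of a word, thereby turning $(k+1)$-circular successions into $(k+1)$-linear successions. Concretely, given $\pi\in G_{\ell,n}$ written in cycle notation, I would (i) rotate the cycle containing $n$ so that it begins with $\pi(n)$ and ends with $n$, and place it first; (ii) write every other cycle with its smallest absolute value as leader, and list these cycles in decreasing order of their leaders; (iii) erase the parentheses to read off the underlying one-line word; and (iv) recolor by telescoping along each cycle, i.e. replace the color sequence $(\gamma_1,\dots,\gamma_r)$ read in that cycle by its partial sums $(\gamma_1,\gamma_1+\gamma_2,\dots,\gamma_1+\cdots+\gamma_r)$ in $\Z/\ell$. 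Call the resulting colored word $\Phi(\pi)$.

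First I would check that $\Phi$ is a bijection by describing its inverse: the unique occurrence of the letter $n$ marks the end of the first block, so the word splits into the cycle of $n$ followed by a standard Foata word whose cycle-leaders are exactly its left-to-right minima; the telescoping is then undone cyclewise by $\gamma_1=d_1$ and $\gamma_j=d_j-d_{j-1}$. Since the cycle partition is recovered from the absolute-value word and telescoping is a bijection of $(\Z/\ell)^r$ for each cycle, $\Phi$ is a bijection of $G_{\ell,n}$.

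Next, for \eqref{prop1}, I would argue that erasing parentheses destroys only the cyclic ``wrap'' steps and creates only the junctions between consecutive blocks. The smallest-leader convention forces every wrap to decrease the absolute value (the $n$-block wrap goes from $n$ to a value $\le n$), while the decreasing order of leaders forces every junction to land strictly below the preceding letter; hence no wrap or junction can be a $+(k+1)$ step, and all within-cycle adjacencies—carrying all of $\mathcal{C}^{k+1}(\pi)$—survive. The telescoping is engineered so that two consecutive letters receive equal color \emph{precisely} when the second was uncolored in $\pi$; therefore a surviving adjacency becomes a $(k+1)$-linear succession of $\Phi(\pi)$ exactly when it was a $(k+1)$-circular succession of $\pi$, with identical recorded integer values. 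This yields $\mathcal{C}^{k+1}(\pi)=\mathcal{L}^{k+1}(\Phi(\pi))$.

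Finally, for \eqref{prop0}, I would not build a second map but deduce it from \eqref{prop1}. A direct computation from \eqref{defn:delta} shows $\mathcal{C}^k(\delta(\pi))=\mathcal{C}^{k+1}(\pi)$, augmented by the single value $k+1$ exactly when $\pi(n)=k+1$ is uncolored; dually, \eqref{type1 type2} gives $\mathcal{L}^{*(k+1)}(\Phi(\pi))=\mathcal{L}^{k+1}(\Phi(\pi))$ augmented by $k+1$ exactly when $\Phi(\pi)(1)=k+1$ is uncolored. Because the construction keeps the first letter equal to $\pi(n)$ (the leader of the first block retains its original color under telescoping), these two augmentation conditions coincide, and combining with \eqref{prop1} gives \eqref{prop0}. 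I expect the main obstacle to be the simultaneous bookkeeping at the three boundary types—cycle wraps, block junctions, and the distinguished first letter: the leader conventions must kill every spurious $+(k+1)$ step while losing none of the genuine ones, the telescoping must reconcile the mismatched color requirements of circular successions (next letter uncolored) against linear successions (two letters equicolored), and all of this must be done while forcing $\Phi(\pi)(1)=\pi(n)$ so that \eqref{prop0} comes for free.
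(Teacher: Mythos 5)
Your proposal is correct and is essentially the paper's own argument: a Foata-type cycle-to-word map whose block boundaries can never carry or create a $+(k+1)$ step, colors telescoped (partial products) along each cycle so that equal adjacent colors occur exactly when the second letter is uncolored in $\pi$, and \eqref{prop0} deduced from \eqref{prop1}, \eqref{delta} and \eqref{type1 type2} via the observation that $\Phi(\pi)$ begins with the letter $\pi(n)$ bearing its original color. The only deviation is cosmetic: you place the cycle of $n$ first and lead the remaining cycles by their minima in decreasing order (inverse read off from the letter $n$ and left-to-right minima), whereas the paper writes every cycle with its maximum at the end in decreasing order of maxima (inverse read off from right-to-left maxima); both conventions exclude boundary steps for the same reason, so the two proofs are isomorphic.
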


Thanks to the transformation   $\Phi$
 the two statistics  $c^k$ and  $l^k$ are
 equidistributed on the group $G_{\ell,n}$ for $k\geq 1$. So we can replace
 the left-hand sides of (2.1)  by
$l_{\ell,n+1,m}^{k+1}$
and derive the following interesting result.
\begin{cor} \label{centreblc}
Let $ n$, $k$ and $m$ be integers such that   $n\geq 1$, $k \geq 0$
 and  $m \geq 0$. Then
\begin{align}\label{relationcentreblc}
l_{\ell,n+1,m}^{k+1}&=c_{\ell,n+1,m}^k+c_{\ell,n,m}^k-c_{\ell,n,m-1}^k,
\end{align}
where $c_{\ell,n,-1}^k=0$.
\end{cor}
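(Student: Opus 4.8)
The plan is to obtain the corollary as an immediate consequence of the recurrence in Theorem~\ref{thm pol}, by feeding in the equidistribution that the bijection $\Phi$ of Theorem~\ref{phib} provides. The only preliminary step is to make that equidistribution explicit. Equation~\eqref{prop1} states that $\mathcal{C}^{k+1}(\pi)=\mathcal{L}^{k+1}(\Phi(\pi))$ for every $\pi\in G_{\ell,n}$, and in particular $c^{k+1}(\pi)=l^{k+1}(\Phi(\pi))$ after taking cardinalities. Because $\Phi$ is a bijection of $G_{\ell,n}$ onto itself, it restricts to a bijection between $\{\pi:c^{k+1}(\pi)=m\}$ and $\{\tau:l^{k+1}(\tau)=m\}$ for each $m$, whence
$$c_{\ell,n,m}^{k+1}=l_{\ell,n,m}^{k+1}\qquad(k\geq 0),$$
for all $n$ and $m$; that is, $c^{j}$ and $l^{j}$ are equidistributed on $G_{\ell,n}$ for every $j\geq 1$.

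With this identity available, I would simply rewrite the left-hand side of \eqref{relation1}. Applying the equidistribution above with $n$ replaced by $n+1$ gives $c_{\ell,n+1,m}^{k+1}=l_{\ell,n+1,m}^{k+1}$, and substituting this into \eqref{relation1} turns it verbatim into \eqref{relationcentreblc}, the convention $c_{\ell,n,-1}^k=0$ being inherited unchanged from Theorem~\ref{thm pol}.

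Once Theorems~\ref{thm pol} and~\ref{phib} are in hand there is no real obstacle; all the work has already been done in establishing the recurrence and the bijection $\Phi$. The single point deserving care is the index range. The bijection in \eqref{prop1} identifies circular and linear successions only for the shifted index $k+1\geq 1$, which is precisely the regime $k\geq 0$ of the corollary and the reason the equidistribution of $c^{j}$ and $l^{j}$ holds only for $j\geq 1$: for $j=0$ a $0$-circular succession is a fixed point, whereas $k$-linear successions are defined only for $k\geq 1$, so the two statistics are not comparable there. Hence the substitution into \eqref{relation1} is justified exactly in the stated range, and no further verification is needed.
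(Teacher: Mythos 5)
Your proposal is correct and is essentially identical to the paper's own argument: the paper likewise deduces from the bijection $\Phi$ of Theorem~\ref{phib} that $c^{k+1}$ and $l^{k+1}$ are equidistributed on $G_{\ell,n}$, and then replaces the left-hand side $c_{\ell,n+1,m}^{k+1}$ of \eqref{relation1} by $l_{\ell,n+1,m}^{k+1}$. Your added care about the index range ($j\geq 1$ only) is a sensible remark but does not change the substance.
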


Our proof of the last  two theorems is a generalization of that given by Clarke et al~\cite{CHZ},
where the  $(k,\ell)=(0,1)$ case of Corollary~\ref{centreblc}
is proved. Note that the $(k,\ell,m)=(0,2,0)$
case of \eqref{relationcentreblc} is the main result of a recent
paper by Chen and Zhang~\cite{CZ}.

\medskip

In order to interpret the entry $d_{\ell,n}^m$
we need the following definition.

\begin{defn} For $0\leq m\leq n$,
a permutation $\pi$ in $G_{\ell,n}$ is called
$m$-increasing-fixed
 if it satisfies the following conditions:
\begin{itemize}
\item[i)]  $\forall i\in [m]$, $\sign_{\si}(|\pi|(i))=1$;
\item[ii)] $\Fix(\si) \subseteq [m]$;
\item[iii)]$\pi(1)<\pi(2)<\cdots <\pi(m)$.
\end{itemize}
\end{defn}
Let $I_{\ell,n}^m$ be the set of  $m$-increasing-fixed permutations in
$G_{\ell,n}$. For example,
$$
I_{2,3}^2=\{
1\,2\,\bar 3,\quad 1\,3\,2,\quad 1\,3\,\bar 2,\quad 2\,3\,1,\quad 2\,3\,\bar 1\}.
$$

\begin{thm}\label{kbis} For $0\leq m\leq n$,
the entry $d_{\ell,n}^m$ equals the cardinality of $I_{\ell,n}^m$.
\end{thm}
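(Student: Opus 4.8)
The plan is to show that $|I_{\ell,n}^m|$ equals $d_{\ell,n}^m$ without reverifying the recurrence \eqref{deulerB} directly: instead I would exploit the relation $d_{\ell,n}^m=g_{\ell,n}^m/(\ell^m m!)$ together with the interpretation of $g_{\ell,n}^m$ already supplied by Theorem~\ref{succession majore}. By part (i) of that theorem, $g_{\ell,n}^m$ counts the set
\[
\mathcal{A}=\{\pi\in G_{\ell,n}\;:\;\Fix(\pi)\subseteq[m]\}.
\]
Since every $m$-increasing-fixed permutation satisfies condition (ii), we have $I_{\ell,n}^m\subseteq\mathcal{A}$, and it suffices to produce an $(\ell^m m!)$-to-one surjection from $\mathcal{A}$ onto $I_{\ell,n}^m$; this immediately yields $|I_{\ell,n}^m|=g_{\ell,n}^m/(\ell^m m!)=d_{\ell,n}^m$.

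To build this correspondence I would partition $\mathcal{A}$ according to the restriction of $\pi$ to its last $n-m$ positions, that is, by the word $\pi(m+1)\,\pi(m+2)\cdots\pi(n)$ with colors included. Within one block the underlying set $\{|\pi(m+1)|,\dots,|\pi(n)|\}$ is fixed, hence so is its complement $T\subseteq[n]$, the set of underlying values occupying positions $1,\dots,m$. The $m$ entries in these first positions may then be arranged in any of the $m!$ orders and assigned any of the $\ell^m$ colorings, so each block has exactly $\ell^m m!$ elements. The crucial observation is that all these fillings still lie in $\mathcal{A}$: altering the first $m$ positions affects only positions $\le m$, where fixed points are permitted by the defining condition (ii), whereas the forbidden region $\{m+1,\dots,n\}$ is left untouched. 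Thus membership in $\mathcal{A}$ is insensitive to reshuffling and recoloring the first $m$ slots, and the blocks genuinely partition $\mathcal{A}$.

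It then remains to single out, in each block, the unique element belonging to $I_{\ell,n}^m$. Conditions (i) and (iii) force the entries in positions $1,\dots,m$ to be uncolored and strictly increasing for the order on $\Sigma_{\ell,n}$; among the $\ell^m m!$ fillings of a block there is exactly one such filling, namely the elements of $T$ listed in increasing order with trivial color. This distinguished filling automatically satisfies (ii) because its block lies in $\mathcal{A}$, so it is precisely the $m$-increasing-fixed representative of the block, and conversely every member of $I_{\ell,n}^m$ arises in this way. Hence each block meets $I_{\ell,n}^m$ in a single point, the map sending a permutation to the representative of its block is $(\ell^m m!)$-to-one, and the theorem follows. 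The only genuinely delicate step is the invariance of the condition $\Fix(\pi)\subseteq[m]$ under the reshuffling; once one notes that fixed points inside $[m]$ are harmless and that positions beyond $m$ are never modified, the remainder is routine, and the extreme cases $m=0$ (derangements, blocks of size $1$) and $m=n$ (all of $G_{\ell,n}$ in one block, whose representative is the identity) can be read off immediately as consistency checks.
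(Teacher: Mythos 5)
Your proof is correct and is essentially the paper's own argument: your blocks (same colored suffix in positions $m+1,\ldots,n$) are exactly the orbits of the group action of $G_{\ell,m}$ on the set $F_{\ell,n}^m$ of permutations with fixed points in $[m]$ that the paper uses, and both arguments select as canonical representative the unique filling of the first $m$ positions that is uncolored and increasing. The only cosmetic difference is that you count each class directly ($m!$ orders times $\ell^m$ colorings) rather than deducing the orbit size $\ell^m m!$ from the (free) group action.
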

\begin{proof}
Let $F_{\ell,n}^m$ be the set
of permutations with fixed points included in $[m]$ in
$G_{\ell,n}$. By Theorem~2 the cardinality of $F_{\ell,n}^m$ equals
$g_{\ell,n}^m$.
We define a mapping $f: (\tau,  \pi)\mapsto \tau\odot\pi$
from  $G_{\ell,m}\times F_{\ell,n}^m$ to
$F_{\ell,n}^m$ as follows:
$$
\tau\odot  \pi=\pi(\tau^{-1}(1))\pi(\tau^{-1}(2))\ldots \pi(\tau^{-1}(m))\pi(m+1)\ldots \pi(n).
$$
Clearly $f$ defines a group action of $G_{\ell,m}$ on the set
$F_{\ell,n}^m$.  We can choose an element
$\pi$ in each orbit such that
$$
\forall i\in [m],\quad \sign_{\si}(|\pi|(i))=1\quad \textrm{and}\quad
\pi(1)<\pi(2)<\cdots <\pi(m).
$$
 As the cardinality of the group $G_{\ell,m}$ is $\ell^m m!$,
 we derive that the number of the orbits
equals
 $g_{\ell,n}^m/\ell^m m!$.
\end{proof}

Rakotondrajao~\cite{Ra2} gave a different interpretation for
$d_{\ell,n}^m$ when $\ell=1$.
We can generalize her result
as in the following theorem.

\begin{defn} For $0\leq m\leq n$,
a permutation $\pi$
 in $G_{\ell,n}$ is called $m$-isolated-fixed if it satisfies the following conditions:
\begin{itemize}
\item[i)]   $\forall i\in [m]$, $\sign_{\si}(i)=1$;
\item[ii)] $\Fix(\si) \subseteq [m]$;
\item[iii)] each cycle of $\pi$ has at most one point in common with $[m]$.
\end{itemize}
\end{defn}
Let $D_{\ell,n}^m$ be the set of $m$-isolated-fixed permutations in $G_{\ell,n}$.
For example,
 $$
 D_{2,3}^2=\{
(1)(2)(\bar 3),\,(1,3)(2),\,(1,\bar 3)(2),\,
(1)(2,3),\, (1)(2,\bar 3)\}.
 $$
Note that $\pi=\bar 3\;1\;2\notin  D_{2,3}^2$ because 1 and 2 are in the same cycle.
\begin{thm}\label{k-fixe-point} For $0\leq m\leq n$,
the entry $d_{\ell,n}^m$ equals the cardinality  of $D_{\ell,n}^m$.
\end{thm}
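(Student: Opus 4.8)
The plan is to prove the identity by induction, showing that the integers $e_{\ell,n}^m:=\#\,D_{\ell,n}^m$ obey the same recurrence \eqref{deulerB} and the same boundary values as $d_{\ell,n}^m$; since \eqref{deulerB} determines all of its entries from the diagonal ones by a downward induction on $m$ together with an induction on $n$, this forces $e_{\ell,n}^m=d_{\ell,n}^m$. (Alternatively, one could aim directly for a bijection $D_{\ell,n}^m\to I_{\ell,n}^m$ and invoke Theorem~\ref{kbis}.) The diagonal case $m=n$ is immediate: if $\pi\in D_{\ell,n}^n$, then condition (iii) forces every cycle to meet $[n]$ in at most one point, so all cycles are singletons, and condition (i) makes every value uncolored; hence $\pi$ is the identity and $e_{\ell,n}^n=1=d_{\ell,n}^n$.

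Next I would dispose of the subtracted term. Deleting the point $m+1$ and relabelling $m+2,\dots,n$ as $m+1,\dots,n-1$ should give a bijection between $\{\pi\in D_{\ell,n}^{m+1}:\ m+1\in\Fix(\pi)\}$ and $D_{\ell,n-1}^m$: since $m+1$ is an uncolored fixed point whose cycle is a singleton disjoint from the other cycles, its removal preserves conditions (i)--(iii), and because $\Fix(\pi)\subseteq[m+1]$ the fixed-point set descends into $[m]$. Thus $\#\{\pi\in D_{\ell,n}^{m+1}:\ m+1\in\Fix(\pi)\}=e_{\ell,n-1}^m$, and the target recurrence becomes equivalent to
\[
e_{\ell,n}^m=\ell(m+1)\,e_{\ell,n}^{m+1}-\#\{\pi\in D_{\ell,n}^{m+1}:\ m+1\in\Fix(\pi)\}.
\]

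The heart of the argument is therefore a near-bijection that, for each $\pi\in D_{\ell,n}^{m+1}$ and each pair $(\zeta^a,j)\in C_\ell\times[m+1]$, produces an element of $D_{\ell,n}^m$, in such a way that every element of $D_{\ell,n}^m$ is obtained exactly once while precisely the $e_{\ell,n-1}^m$ configurations in which $m+1$ remains an uncolored fixed point are omitted. The intended effect is to \emph{demote} the value $m+1$, which in $D_{\ell,n}^{m+1}$ heads its own cycle and is forced to be uncolored, into an ordinary point: the color $\zeta^a$ records its new sign (permitted once the threshold drops to $m$), while $j$ records how its cycle is spliced into the cycle structure relative to $[m]$, the value $j=m+1$ being reserved for the deleted/fixed case. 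The main obstacle is that in a general $\pi\in D_{\ell,n}^m$ the point $m+1$ may sit at an arbitrary depth inside the cycle of some $j\in[m]$, so a naive ``insert immediately after $j$'' rule fails to be surjective; making the splicing realize every cyclic position while staying invertible is the crucial point. The natural device for this is a fundamental-transformation (Foata-type) argument passing between the cyclic description of $D_{\ell,n}^m$ and a linear one, which is precisely the mechanism linking $D_{\ell,n}^m$ to the increasing model $I_{\ell,n}^m$ of Theorem~\ref{kbis}; carrying the signs $\sign_\pi$ correctly through this transformation is the last technical step.
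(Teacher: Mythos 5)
Your overall strategy is viable --- in fact the paper itself carries out this very plan in Section~8, where it shows bijectively that the cardinalities of the sets $D_{\ell,n}^m$ satisfy the recurrence \eqref{deulerB'}, which is equivalent to \eqref{deulerB} --- and your treatment of the diagonal case and of the subtracted term (deleting the uncolored fixed point $m+1$ to land in $D_{\ell,n-1}^m$) is correct. But there is a genuine gap at precisely the place you flag as ``the crucial point'': the near-bijection from $\mathcal{C}_\ell\times[m+1]\times D_{\ell,n}^{m+1}$ onto $D_{\ell,n}^m$, with the uncolored-fixed-point configurations left over, is never constructed. You rightly observe that naively splicing the cycle of $m+1$ in \emph{after} $j$ cannot work, but you then only assert that ``a Foata-type argument'' will repair this; no map is defined and nothing is verified. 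Since the rest of the proposal is routine bookkeeping, this unconstructed map is not a technical detail to be deferred --- it is the entire content of the theorem, and as written your argument does not prove it.

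For the record, the repair is more elementary than a fundamental transformation. Given $\pi\in D_{\ell,n}^m$, let $j$ be the unique element of $[m]$ lying in the cycle of $m+1$ if there is one, and $j=m+1$ otherwise; let $q$ be minimal with $|\pi|^q(m+1)=j$; set $\zeta^a=\sign_{\pi}(m+1)$, cut out the whole segment $m+1,\,|\pi|(m+1),\dots,|\pi|^{q-1}(m+1)$, and make it a new cycle headed by an uncolored $m+1$. This yields $(\zeta^a,j,\pi')$ with $\pi'\in D_{\ell,n}^{m+1}$, and the inverse re-inserts the entire cycle of $m+1$ of $\pi'$, as a word, immediately \emph{before} $j$ (so that its last letter maps to $j$). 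The ``arbitrary depth'' of $m+1$ that worries you is encoded in the length and content of the extracted segment, which is why insertion before $j$ is both injective and surjective, whereas insertion after $j$ is neither. This is exactly the bijection $\vartheta$ of Section~8 of the paper (stated there with threshold $m-1$ instead of $m$). Note finally that the paper's own two proofs of Theorem~\ref{k-fixe-point} avoid the recurrence altogether: the first constructs a direct bijection $D_{\ell,n}^m\to I_{\ell,n}^m$ and invokes Theorem~\ref{kbis} (the alternative you mention parenthetically, but again without building the bijection), and the second realizes $D_{\ell,n}^m$ as a set of orbit representatives for a $G_{\ell,m}$-action on the permutations whose fixed points lie in $[m]$, whose number is $g_{\ell,n}^m$ by Theorem~\ref{succession majore}.
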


As we will show in Section~7
there are more recurrence relations for
$g_{\ell,n}^m$ and $d_{\ell,n}^m$.
 In particular, we shall prove 
 an explicit formula for the
$\ell$-derangement numbers:
\begin{align}\label{formuleD}
d_{\ell,n}^0=g_{\ell,n}^0=n!\sum_{i=0}^n\frac{(-1)^i\ell^{n-i}}{i!},
\end{align}
which implies immediately the following recurrence relation:
\begin{align}\label{drec4}
d_{\ell,n}^0&= \ell nd_{\ell,n-1}^{0}+
(-1)^n \qquad (n\geq 1).
\end{align}

Note that \eqref{drec4} is the $\ell$-version of a famous recurrence for derangements. 
Using the combinatorial interpretation
for $g_{\ell,n}^m$ and $d_{\ell,n}^m$ it is possible to derive bijective proofs
of these recurrence relations. However
we  will just give combinatorial proofs for \eqref{drec4}  and two other recurrences
by generalizing the combinatorial proofs of Rakotondrajao~\cite{Ra2} for $\ell=1$ case,
and leave the others for the interested
readers.

The rest of this paper is organized as follows:  The proofs of
 Theorems~2, 3, 6 and 11 will be given in Sections~3, 4, 5 and
6, respectively. In
Section~7 we give the generating function of the coefficients
$g_{\ell,n}^m$'s and derive  more recurrence relations for the
coefficients $g_{\ell,n}^m$'s and $d_{\ell,n}^m$'s.
Finally, in Section~8
we give combinatorial proofs of three remarkable recurrence relations of $d_{\ell,n}^m$'s.

\section{Proof of Theorem~\ref{succession majore}}
Let  $m$ and $k$ be integers such that $n \geq m \geq k\geq 0$.
Denote by $G_{\ell,n}^m(k)$ the set of permutations in $ G_{\ell,n}$
whose $k$-circular successions are bounded by
 $m$ and $s_{\ell,n}^m=\#G_{\ell,n}^m(k)$.
We show that the sequence ($s_{\ell,n}^m$) satisfies  \eqref{eulerB}.

By definition, we have immediately
$G_{\ell,n}^n(k)=G_{\ell,n}$ and then  $s_{\ell,n}^n=\ell^n\,n!$.
Now, suppose $m<n$, then $G_{\ell,n}^{m+1}(k)\setminus G_{\ell,n}^m(k)$
is the set of permutations in $G_{\ell,n}^{m+1}(k)$ whose maximal
$k$-circular succession is $m+1$.
 It remains to show that  the cardinality of
 the latter  set equals $s_{\ell,n-1}^{m}$.
 To this end, we define a  simple bijection
 $\rho: \si\mapsto \pi'$ from
 $G_{\ell,n}^{m+1}(k)\setminus G_{\ell,n}^m(k)$ to $G_{\ell, n-1}^m(k)$
as follows.

 Starting from any $\pi=\pi_1\pi_2\ldots \pi_n$
in $G_{\ell,n}^{m+1}(k)\setminus G_{\ell,n}^m(k)$, we construct  $\pi'$ by deleting
$\pi_{m+1-k}=m+1$ and replacing each letter $\pi_i$ by $\pi_i-1$
if $|\pi_i|>m+1$. Conversely, starting from
$\si'=\pi_1'\pi_2'\ldots \pi_{n-1}'$ in ${G}_{\ell,n-1}^{m}(k)$,
 one can recover $\si$ by inserting
$m+1$ between $\pi_{m-k}'$ and $\pi_{m-k+1}'$ and then replacing
each letter $\pi'_{i}$ by $\pi_i'+1$ if $|\pi_i'|>m$.
For example, if
$\si=
3\, \bar{9}\, 5\, \bar{\bar{8}}\, \bar{7}\, \bar{\bar{6}}\, 2\, \bar{1}\,
4\in G_{3,9}^5(2)$, then
$ \si'=
3\,\bar{8}\, \bar{\bar{7}}\,\bar{6}\, \bar{\bar{5}}\, 2\, \bar{1}\, 4
\in {G}_{3,8}^4(2)$.
Note that $\pi' $ has a $k$-circular succession  $j\geq
m+1$ if and only if $j+1\geq m+2$ is a $k$-circular succession
 of $\pi$. Therefore, the maximal $k$-circular succession
of $\pi$ is $m+1$ if and only if the $k$-circular
successions of $\pi'$ are bounded by $m$. This completes the proof.

\begin{rmk}
The above argument does not explain why
 $g_{\ell,n}^m$ is independent from $k$ $(0\leq k\leq m)$.
We can provide such an argument as follows.
Consider the following simple bijection
$d$ which consists in  transforming
 $\pi=\pi_1\pi_2\pi_3\cdots\pi_n$ into
  $d(\pi)=\pi'=\pi_2\pi_3\cdots\pi_n\pi_1$.
  Clearly
  the  $k$-successions of $\pi$ are bounded by  $m$ if and only if
   the $(k+1)$-successions of
    $\pi'$ are bounded by
     $m$. Hence, denoting by $d^j$ the  composition of $j$-times of $d$,
     the application of $d^{k_2-k_1}$ permits to pass  from
      $k_1$-successions to  $k_2$-successions if $k_1<k_2$. In particular
      if we apply $m$ times  the mapping  $d$ to  a
       permutation whose fixed points are bounded  by
        $m$ then we obtain a  permutation without  $m$-succession and vice versa.
\end{rmk}
\section{Proof of Theorem~\ref{thm pol}}
Let $S_n^k(x)$ be the counting polynomial of the statistic $c^k$
 on the group $G_{\ell,n}$, i.e.,
\begin{align}
S_n^k(x)=\sum_{\si\in
G_{\ell,n}}x^{c^k(\si)}=\sum_{m=0}^nc_{\ell,n,m}^kx^m.
\end{align}
Then \eqref{relation1} is equivalent to
the following equation:
\begin{equation}\label{relation pol}
S_{n+1}^{k+1}(x)=S_{n+1}^{k}(x)+(1-x)S_{n}^{k}(x).
\end{equation}
By \eqref{defn:delta} it is readily seen that
\begin{align}\label{delta}
C^{k+1}(\si)=\left\{%
\begin{array}{ll}
   C^{k}(\delta(\si)), & \hbox{if $\si_n\not= k+1$;} \\
 C^{k}(\delta(\si))\setminus\{k+1\}, & \hbox{otherwise.} \\
\end{array}%
\right.
\end{align}
It follows that
\begin{align}
S_{n+1}^{k+1}(x) &=\sum_{\si\in G_{\ell,n+1}\atop
\si(1)=k+1}x^{c^k(\si)-1}
+\sum_{\si\in G_{\ell,n+1}\atop \si(1)\not=k+1}x^{c^k(\si)}\nonumber\\
&=\sum_{\si\in G_{\ell,n+1}\atop \si(1)=k+1}x^{c^k(\si)-1}
+\sum_{\si\in G_{\ell,n+1}}x^{c^k(\si)}-\sum_{\si\in
G_{\ell,n+1}\atop \si(1)=k+1}x^{c^k(\si)}.\label{eqrec}
\end{align}
For any $\pi\in G_{\ell,n+1}$ such that  $\si(1)=k+1$ we can
associate bijectively a permutation $\pi'\in G_{\ell,n}$ such that
$c^k(\pi)=c^k(\pi')+1$ as follows: $\forall i\in [n]$,
$$
\pi'(i)=\left\{%
\begin{array}{ll}
    \pi(i+1), & \hbox{if $\pi(i+1)\leq k$;} \\
    \pi(i+1)-1, & \hbox{if $\pi(i+1)>k$.} \\
\end{array}%
\right.
$$
Therefore we can rewrite \eqref{eqrec} as \eqref{relation pol}.

 We can also derive Theorem~3 from Theorem~2. First we prove a lemma.

\begin{lem} For $0\leq k\leq n-m$ there holds
\begin{align}\label{petitresultat}
c_{\ell,n,m}^k={n-k\choose m}g_{\ell,n-m}^k.
\end{align}
\end{lem}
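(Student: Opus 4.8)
The plan is to build an explicit bijection that records a permutation with exactly $m$ $k$-circular successions as a choice of an $m$-element subset together with a permutation on a smaller ground set carrying no $k$-circular succession at all. First I would rewrite the right-hand factor combinatorially: applying Theorem~\ref{succession majore}(ii) with $n\mapsto n-m$ and $m\mapsto k$ (legitimate since $k\le n-m$), the number $g_{\ell,n-m}^k$ equals the number of $\tau\in G_{\ell,n-m}$ with $c^k(\tau)=0$, i.e.\ with no $k$-circular succession. Hence it suffices to exhibit a bijection between $\{\si\in G_{\ell,n}:c^k(\si)=m\}$ and $\binom{[n-k]}{m}\times\{\tau\in G_{\ell,n-m}:c^k(\tau)=0\}$.

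Given $\si$ with $c^k(\si)=m$, let $P=\{i:\si(i)=i+k\}$ be the set of positions carrying a $k$-succession; since each such value $i+k$ lies in $\{k+1,\dots,n\}$, we get $P\subseteq[n-k]$ and $|P|=m$, while the corresponding value set is $P+k:=\{p+k:p\in P\}=\mathcal{C}^k(\si)$. I would then delete from the one-line form of $\si$ the $m$ entries at the positions of $P$ (their values being $P+k$) and relabel the surviving positions $A=[n]\setminus P$ and surviving values $B=[n]\setminus(P+k)$ order-isomorphically onto $[n-m]$, carrying the colors along unchanged; call the two increasing relabelling maps $\alpha:A\to[n-m]$ and $\beta:B\to[n-m]$. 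This produces $\tau\in G_{\ell,n-m}$, and $\si\mapsto(P,\tau)$ is the forward map. Conversely, from any admissible pair $(P,\tau)$ one reconstructs $\si$ by setting $\si(p)=p+k$ (uncolored) for $p\in P$ and transporting $\tau$ back through $\alpha$ and $\beta$ on $A$ and $B$; one checks routinely that the resulting $\si$ is a genuine element of $G_{\ell,n}$.

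The one substantive point, and the step I expect to be the main obstacle, is to verify that this relabelling neither forgets a succession nor manufactures a spurious one, so that $\si$ and $\tau$ agree on $k$-successions off of $P$. This comes down to the arithmetic identity that for $i\in A$ and $v\in B$,
\[
\beta(v)=\alpha(i)+k \iff v=i+k .
\]
To prove it I would write $\alpha(i)=i-a(i)$ and $\beta(v)=v-a(v-k)$, where $a(x):=\#\{p\in P:p<x\}$, using that $v\in B$ forces $v-k\notin P$ and $i\in A$ forces $i\notin P$. Putting $g(x):=x-a(x)$, which is strictly increasing on integers outside $P$, one obtains $\alpha(i)=g(i)$ and $\beta(v)=g(v-k)+k$, so the displayed condition reduces to $g(v-k)=g(i)$; strict monotonicity then yields $v-k=i$. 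Because a $k$-succession demands an uncolored value and colors are preserved throughout the construction, this identity shows a position of $A$ is a $k$-succession of $\si$ exactly when its image is one of $\tau$; as $A$ contains no $k$-succession of $\si$ by definition of $P$, we get $c^k(\tau)=0$, and in reverse the reconstruction creates $k$-successions only at $P$. With the bijection in hand, summing over the $\binom{n-k}{m}$ choices of $P$ gives $c_{\ell,n,m}^k=\binom{n-k}{m}\,g_{\ell,n-m}^k$.
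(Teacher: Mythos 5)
Your proof is correct and follows essentially the same route as the paper: both record the set $P\subseteq[n-k]$ of succession positions, delete the corresponding entries and standardize to get an element of $G_{\ell,n-m}$ with no $k$-circular succession, and then invoke Theorem~\ref{succession majore}(ii) to identify that count with $g_{\ell,n-m}^k$. The only differences are presentational: the paper performs the deletion one letter at a time via maps $R_i$ and leaves the succession-preservation claim as ``readily seen,'' whereas you delete all $m$ entries at once and verify that claim explicitly by the monotonicity argument for $g(x)=x-a(x)$.
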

\begin{proof}
To construct a permutation  $\pi$ in $G_{\ell,n}$ with $m$ $k$-circular successions
we can first choose $m$ positions $i_1,\ldots, i_{m}$ of $k$-circular successions among the first $n-k$ ones and then construct a
permutation $\pi_{0}$ of order $n-m$ without $k$-circular successions
on the remaining $n-m$ positions, where and in what follows we shall assume that 
$i_{1}<i_{2}<\cdots <i_{m}$.
 More precisely, there is  a bijection  $\theta:\, \pi \mapsto (I,\pi_0)$, where
 $I=\{i_1,\ldots, i_{m}\}$, from the set  of the colored permutations 
 of order $n$ with $m$ $k$-successions to the product of 
 the set of all $m$-subsets   of $[n-k]$ and the set of colored permutations
  of order $n-m$ without $k$-circular successions.
  
Denote by $G_{\ell,n,k,i}$ the set of all permutations in $G_{\ell,n}$ whose maximal position of
 $k$-circular successions equals $i$.  Define the mapping  $R_i: \pi \mapsto \pi'$   from $G_{\ell,n,k,i}$ to $G_{\ell,n-1}$ such that the linear form of $\pi'$ is obtained from $\pi=\pi_{1}\ldots \pi_{n}$ by removing the letter $(i+k)$ and replacing each colored letter $\pi_j$ by  $\pi_j-1$ if $|\pi_j|>i+k$.
 It is readily seen that the map $R_i$ is a bijection and  $c^k(\pi')=c^k(\pi)-1$. Indeed it is easy to see that $j+k$ is a $k$-circular succession of $\pi$ different of $i+k$ if and only if $j+k$ is a $k$-circular succession of $\pi'$. Hence, $\pi_0=R_{i_1}\circ R_{i_2}\circ \cdots\circ R_{i_m}(\pi)$ is a colored permutation
without $k$-circular succession in $G_{\ell,n-m}$.

Conversely given a subset $I=\{i_1,i_2,\cdots,i_m\}$ of $[n-k]$ and a  colored permutation $\pi_0$
without $k$-circular succession in $G_{\ell,n-m}$ we can  construct
$$
\pi=\theta^{-1}(I,\pi_0)=R_{i_m}^{-1}\circ R_{i_{m-1}}^{-1}\circ \cdots\circ R_{i_1}^{-1}(\pi_0),
$$
where $R_i^{-1}(\pi')$
is obtained  from $\pi'=\pi_{1}'\ldots \pi_{n-1}'$ by inserting the integer $(i+k)$ between 
$\pi_{i-1}'$ and $\pi_{i}'$ and  replacing each colored letter $\pi'_j$
by  $\pi'_j+1$ if $|\pi'_j|\geq i+k$. Therefore
\begin{align}\label{eq:pre}
c_{\ell,n,m}^k={n-k\choose m}c_{\ell,n-m,0}^k.
\end{align}
By Theorem~2 (ii) we have $c_{\ell,n,0}^k=g_{\ell, n}^k$.
Substituting this
in \eqref{eq:pre} yields then \eqref{petitresultat}.

\end{proof}

 By \eqref{petitresultat} we see that
\eqref{relation1} is equivalent to
$$
{n-k\choose m}g_{\ell, n+1-m}^{k+1}={n+1-k\choose m}g_{\ell,
n+1-m}^{k}+{n-k\choose m}g_{\ell, n-m}^{k}-{n-k\choose
m-1}g_{\ell, n+1-m}^{k}.
$$
Since $g_{\ell, n+1-m}^{k+1}-g_{\ell, n-m}^{k}=g_{\ell,
n-m+1}^{k}$ by \eqref{eulerB}, we can rewrite the last equation as
$$
{n-k\choose m}g_{\ell, n+1-m}^{k}={n+1-k\choose m}g_{\ell,
n+1-m}^{k}-{n-k\choose m-1}g_{\ell, n+1-m}^{k},
$$
which is obvious in view of the identity ${n-k\choose
m}={n+1-k\choose m}-{n-k\choose m-1}$. This completes the proof of
Theorem~3.

\section{Proof of Theorem~\ref{phib}}
There is a well-known bijection on the symmetric groups
transforming the cyclic structure into linear structure (see
\cite{FS} and \cite[p. 17]{St}).
 We need a variant of this  transformation, say $\varphi: S_n\to
 S_n$, as follows.

Given a permutation $\sigma\in S_n$ written  as a product of
cycles,  arrange the cycles in the decreasing order of their
maximum elements from left to right with the maximum element at
the end of each cycle. We then obtain $\varphi(\sigma)$ by erasing
the parentheses. Conversely, starting from a permutation written
in one-line form $\sigma'=a_1a_2\ldots a_n$, find out the
\emph{right-to-left maxima} of $\sigma'$ from right to left and
decompose the word  $\sigma'$ into blocks by putting a bar  at the
right of each right-to-left maximum and construct a cycle of
$\sigma$ with each block.

For example, if $\sigma=(3,\, 1,\, 4, \,6,\, 9)(5, \,7, \,8)(2)\in S_9$
then $\varphi(\sigma)=3\;1\; 4\, 6\; 9\; 5\; 7\; 8\; 2$.
Conversely, starting from $\sigma'=3 \;1\; 4\, 6\; \textbf{9}\;
5\; 7\; \textbf{8}\; \mathbf{2}$, so the right-to-left maxima are
9,8 and 2, then the decomposition into blocks is   $3\; 1\; 4
\;6\; 9| 5 \;7 \;8|2|$ and we recover $\sigma$ by putting
parentheses around each block.
\begin{lem}\label{foata}
For $k\geq 1$, the mapping $\varphi$ transforms
the  {\kscs} to  {\ksls} and  vice versa.
\end{lem}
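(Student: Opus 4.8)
The plan is to reduce the statement to a property of a single adjacency in the word $w=\varphi(\sigma)$, where $\sigma=|\pi|$, and to read both kinds of successions off the block (right-to-left maximum) structure produced by $\varphi$. The one fact about $\varphi$ that I would isolate first is the following: in $w=w_1\cdots w_n$ the right-to-left maxima are exactly the cycle maxima, i.e.\ the letters that end the blocks, and for each position $p\geq 2$ one has $w_p=\sigma(w_{p-1})$ if and only if $w_{p-1}$ is not a right-to-left maximum (when $w_{p-1}$ ends a block, $\sigma$ sends it back to the start of its own block, not to $w_p$). This is immediate from the description of $\varphi^{-1}$: the blocks are cut precisely after the right-to-left maxima, and inside a block consecutive letters are related by $\sigma$.

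With this in hand I would prove, for a value $v$ and any $k\geq 1$, the equivalence $\sigma(v-k)=v \iff v-k \text{ immediately precedes } v \text{ in } w$, whose left side says $v$ is a $k$-circular succession of $\sigma$ and whose right side says $v$ is a $k$-linear succession of $\varphi(\sigma)$. For the forward implication, $\sigma(v-k)=v$ places $v-k$ and $v$ in one cycle with $v>v-k$, so $v-k$ is not the maximum of that cycle, hence not a right-to-left maximum of $w$; the property above then makes the letter after $v-k$ equal to $\sigma(v-k)=v$. Conversely, if $v-k$ immediately precedes $v$, then $v-k$ is followed by the strictly larger letter $v$, so $v-k$ is not a right-to-left maximum, and the property again yields $\sigma(v-k)=v$. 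Taking the union over succession values gives $\mathcal{C}^k(\sigma)=\mathcal{L}^k(\varphi(\sigma))$. Note that $k\geq 1$ enters exactly once, to guarantee $v>v-k$ and thereby exclude the block-boundary case; this is the structural reason the statement must be treated separately for fixed points ($k=0$).

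To lift this to the colored group $G_{\ell,n}$ I would run $\varphi$ on the underlying $\sigma$ and transport the colors along the blocks by partial sums: inside each block $b_1\cdots b_r$ of $w$ I assign to $b_j$ the color $\sum_{i\leq j}\sign_{\pi}(b_i)$. This is an invertible reparametrization within each block (the increments recover the original $\sign_\pi(b_i)$), so the extended map is still a bijection of $G_{\ell,n}$, and for two block-consecutive letters the color increment from $b_{j-1}$ to $b_j$ is exactly $\sign_\pi(b_j)$. Hence $b_{j-1}$ and $b_j$ receive equal colors in $\varphi(\pi)$ if and only if $\sign_\pi(b_j)=1$, i.e.\ if and only if the edge $b_{j-1}\to b_j$ of $\pi$ is uncolored. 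Since a $k$-circular succession $\pi(v-k)=v$ additionally demands that $v$ be uncolored, while a $k$-linear succession at $v$ additionally demands that $v$ and its predecessor $v-k$ share a color, the two extra color conditions coincide once the underlying adjacency is in place, and the colored equality $\mathcal{C}^k(\pi)=\mathcal{L}^k(\varphi(\pi))$ follows.

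The routine part is the permutation combinatorics of the first two paragraphs; the step I expect to be delicate is precisely this colored bookkeeping. The naive rule of keeping each value's color fixed does not work — it records the color of the incoming edge, whereas a linear succession is governed by the agreement of two adjacent colors — so one must check that the partial-sum transport is the correct correction, that it stays bijective, and that it converts ``uncolored target in the cycle'' into ``equal adjacent colors in the word'' with no off-by-one error at block boundaries. This is the only place where the $\ell\geq 2$ case genuinely departs from the $\ell=1$ argument of Clarke et al.
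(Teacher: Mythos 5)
Your proof is correct and takes essentially the same route as the paper: the right-to-left-maximum/adjacency argument of your first two paragraphs is precisely the paper's proof of this lemma, which concerns only the uncolored map $\varphi$ on $S_n$ (there, $k\geq 1$ likewise enters only to rule out the block-boundary case). Your third paragraph is not needed for the lemma as stated, but your partial-sum color transport coincides with the paper's subsequent definition of the bijection $\Phi$ and its sign rule, so even this extra material agrees with the paper's treatment of the colored case.
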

\begin{proof}
Indeed,
an integer $p$ is a {\ksc}  of $\sigma$  \sii  there is an integer
$i \in [n]$ such that
 $\sigma(i)=i+k$, so  $i$ and $i+k$ are two consecutive letters in the one-line form
 of $\varphi(\sigma)$. Conversely if $i$ and $i+k$ are two consecutive letters
 in the one-line form of a permutation $\tau$ then $i$ cannot be a right-to-left maximum, so
 $i$ and $i+k$ are in the same cycle of $\varphi^{-1}(\tau)$, say $\sigma$,
  and then $\sigma(i)=i+k$.

\end{proof}

 We now construct a
bijection $\Phi: \pi\mapsto \pi'$  from $G_{\ell,n}$ onto itself
such that
\begin{equation}
  C^k(\si)=L^k( \pi') \quad  (k \geq 1).
  \end{equation}
  Let
$\sigma=|\pi|$ and $\sigma'=|\pi'|$. \\
\noindent{\bf Bijection $\Phi$:} First define
$\sigma'=\varphi(\sigma)$: Factorize $\sigma$ as product of $r$
disjoint cycles $C_1,\ldots, C_r$.  Suppose that $\ell_i$ and $g_i$
are, respectively,  the length and
 greatest
element of the  cycle $C_i$ ($1\leq i\leq r$)  such that
$g_1>g_2>\cdots >g_r$. Then
\begin{align}\label{linear}
\sigma'=\sigma(g_1)\;\cdots\;
\sigma^{\ell_1-1}(g_1)\;g_1\;\sigma(g_2)\;\cdots\;
\sigma^{\ell_2-1}(g_2)\;g_2\;\cdots \;\sigma(g_r)\;\cdots\;
\sigma^{\ell_r-1}(g_r)\;g_r.
\end{align}
Let  $T_\sigma =\lbrace  \sigma(g_i), \  i \in [r]\rbrace$.
It remains to define $\sign_{\pi'}(\sigma^j(g_i))$ for all $i\in
[r]$ and $1\leq j\leq \ell_i$.
 We proceed by induction on $j$ as
 follows: For each $i\in [r]$ let
 $$
\sign_{\si'}(\sigma(g_i))=\sign_{\pi}(\sigma(g_i)),
 $$
 and for $j=2,\ldots, \ell_j$ define
\begin{equation}\label{sign}
\sign_{\si '}(\sigma^{j}(g_i))= \sign_{\si
'}(\sigma^{j-1}(g_i))\cdot \sign_{\si}(\sigma^j(g_i)).
\end{equation}
It is easy to establish the inverse of $\Phi$.

 \noindent{\bf
Bijection $\Phi^{-1}$:} Starting from $\pi'$ we can recover
$\sigma=|\pi|$ by applying $\varphi^{-1}$ to $\sigma'$. Suppose
$\sigma'$ is given as in \eqref{linear} and $g_1,\ldots, g_r$ are
the left-to-right-maxima. We then determine
$\sign_{\pi}(\sigma^j(g_i))$ for all $i\in [r]$ and $1\leq j\leq
\ell_i$ as
 follows: For each $i\in [r]$ let
 $$
\sign_{\si}(\sigma(g_i))=\sign_{\pi'}(\sigma(g_i)),
 $$
 and for $j=2,\ldots, \ell_j$ define
\begin{equation}\label{sign'}
\sign_{\si }(\sigma^{j}(g_i))=
\sign_{\si'}(\sigma^j(g_i))(\sign_{\si
'}(\sigma^{j-1}(g_i)))^{-1},
\end{equation}
where $\sign_{\pi}^{-1}(i)$ is the inverse of $\sign_{\pi}(i)$ in
the cyclic group $C_{\ell}$.

\begin{lemma} For $k\geq 1$,
the mapping $\Phi$ transforms a $k$-circular succession of $\si$
to a  $k$-linear succession of $\si'$ and  vice versa.
\end{lemma}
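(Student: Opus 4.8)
The plan is to verify the set identity $\mathcal{C}^k(\pi)=\mathcal{L}^k(\pi')$ for $\pi'=\Phi(\pi)$ by treating its two defining ingredients separately: the condition on the underlying permutations $\sigma=|\pi|$ and $\sigma'=|\pi'|=\varphi(\sigma)$, and the condition on the colors recorded by $\sign_\pi$ and $\sign_{\pi'}$. Since both $\mathcal{C}^k$ and $\mathcal{L}^k$ are sets of \emph{values}, it suffices to prove, for each value $v$, the biconditional ``$v$ is a $k$-circular succession of $\pi$'' $\Longleftrightarrow$ ``$v$ is a $k$-linear succession of $\pi'$''. Unwinding the definitions and the conventions i)--iii), the left side says $\sigma(v-k)=v$ together with $\sign_\pi(v)=1$, while the right side says that $v-k$ and $v$ sit in consecutive positions $q-1,q$ of $\sigma'$ (with $\sigma'(q)=v$) together with the color match $\sign_{\pi'}(v)=\sign_{\pi'}(v-k)$.

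First I would dispose of the underlying-permutation part using Lemma~\ref{foata}: it gives exactly that $\sigma(v-k)=v$ holds if and only if $v-k$ and $v$ are two consecutive letters of $\sigma'$, in that order. Thus the two ``$\sigma$-level'' conditions are equivalent, and along the way this pins down the location of $v$ inside $\sigma$: writing $g$ for the maximum of the cycle of $\sigma$ containing $v-k$, the relation $\sigma(v-k)=v>v-k$ forces $v-k\neq g$, so that $v-k=\sigma^{j-1}(g)$ and $v=\sigma^{j}(g)$ with $j\geq 2$. This is precisely the regime in which the recursive clause \eqref{sign}, rather than its base case, governs the sign of $v$ in $\pi'$.

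It then remains to match the colors, which is the heart of the matter. By \eqref{sign} applied with this $j\geq 2$,
\begin{equation*}
\sign_{\pi'}(v)=\sign_{\pi'}(v-k)\cdot\sign_{\pi}(v),
\end{equation*}
so that $\sign_{\pi'}(v)=\sign_{\pi'}(v-k)$ holds in $C_\ell$ if and only if $\sign_{\pi}(v)=1$. Combining this equivalence with the $\sigma$-level equivalence from the previous step yields the desired biconditional, hence $\mathcal{C}^k(\pi)=\mathcal{L}^k(\pi')$. The reverse direction (every $k$-linear succession of $\pi'$ arises this way) is already contained in the biconditional, and it is also consistent with $\Phi$ being a bijection whose inverse, built from $\varphi^{-1}$ and \eqref{sign'}, reverses each step.

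I expect the only delicate point to be the bookkeeping that identifies $v-k$ and $v$ with consecutive cycle powers $\sigma^{j-1}(g),\sigma^{j}(g)$ and guarantees $j\geq 2$; this is exactly what makes the one-step recursion \eqref{sign} applicable, and it is where the hypothesis $k\geq 1$ is used (for $k=0$ one could have $v-k=g$, so the argument, and indeed the statement, would break). Everything else is a direct substitution into the definitions.
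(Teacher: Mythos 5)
Your proof is correct and takes essentially the same route as the paper: Lemma~\ref{foata} handles the underlying permutations, the observation that $\sigma(v-k)=v>v-k$ rules out $v-k$ being a cycle maximum (so $v\notin T_\sigma$ and the recursive clause \eqref{sign} applies), and that recursion yields the sign equivalence. The only cosmetic difference is that you get the paper's two implications (i) and (ii) in one stroke by cancellation in the group $C_\ell$.
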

\begin{proof}
By Lemma~\ref{foata} we have the equivalence: $\sigma(i) $ is a
$k$-circular succession of $\sigma$ if and only if $\sigma(i)$ is
a $k$-linear succession of $\sigma'$. It remains to  verify that
if $\sigma(i)$ is a $k$-circular succession
 of $\sigma$ then
\begin{align}\label{eqH}
\sign_{\si}(\sigma(i))=1 \Leftrightarrow
\sign_{\si'}(\sigma(i))=\sign_{\si'}(i)
 \end{align}
Note that if $\sigma(i)$ is a $k$-circular succession of $\sigma$
then  $i$ and $\sigma(i)$ must be in the same cycle
 and that $\sigma(i)$ cannot be in  $T_\sigma$ for, otherwise, $i$ would be the greatest element
 of the cycle but this is impossible because
  $\sigma(i)=i+k$ ($k\geq 1$).

 Now,  assume that $\sigma(i)$ is a  $k$-circular succession of $\sigma$.
\begin{itemize}
\item[(i)] Suppose $\sign_{\si}(\sigma(i))=1$. As $\sigma(i)\not \in
T_\sigma $ and $\sigma^{-1}(\sigma(i))=i$, we have
$$
\sign_{\si'}(\sigma(i))=\sign_{\si'}(i)\cdot
\sign_{\si}(\sigma(i))=\sign_{\si'}(i).
$$
\item[(ii)] Suppose that
$\sign_{\si'}(\sigma(i))=\sign_{\si'}(i)$. As $\sigma(i)\not \in
T_\sigma $ and $\sigma^{-1}(\sigma(i))=i$, we have
$$
\sign_{\si}(\sigma(i))=\sign_{\si'}(\sigma(i))/\sign_{\si'}(i)=1.
 $$
\end{itemize}
Hence \eqref{eqH} is established.
\end{proof}

Obviously the above lemma is equivalent to \eqref{prop1}. We
obtain \eqref{prop0} by combining  \eqref{prop1}, \eqref{type1
type2} and \eqref{delta}.

 We conclude this section with an
example. Consider
$$
\si=  \left (
\begin{array}{lllllllll}
1&2&3&4&5&6&7&8&9\\
\bar{3}&4&\bar{9}&\bar{8}&7&\bar{5}&6&\bar{\bar{2}}&\bar{\bar{1}}
\end{array}
\right) \in G_{4,9}.
$$

Factorizing  $\sigma=|\pi|$ into cycles we get  $\sigma=(1, 3 ,9 ) (
2 ,4, 8) (6, 5, 7 )$, then
$$\sigma'= 1 \ 3 \ 9 \ 2\  4\  8\ 6\  5  \ 7
\quad \text{and}\quad T_\sigma= \lbrace 1, 2, 6\rbrace.
$$
 The  signs of $\sigma'(i)$ for  $i$  $ \in [9]$ are computed as
 follows:
\begin{align*}
\sign_{\si '}(1)&=\sign_{\si}(1)=\zeta^2\quad \textrm{for}\quad 1 \in T_\sigma;\\
\sign_{\si '}(3)&=\sign_{\si '}(1)\cdot \sign_{\si}(3)=\zeta^2.\zeta=\zeta^3\quad \textrm{for}\quad 3 \not\in T_\sigma;\\
\sign_{\si '}(9)&=\sign_{\si '}(3)\cdot \sign_{\si}(9)=\zeta^3.\zeta=1\quad \textrm{for}\quad 9\not\in T_\sigma;\\
\sign_{\si '}(2)&=\sign_{\si '}(2)=\zeta^2 \quad \textrm{for}\quad 2 \in T_\sigma;\\
\sign_{\si '}(4)&=\sign_{\si '}(2)\cdot \sign_{\si }(4)=\zeta^2\cdot1=\zeta^2\quad \textrm{for}\quad 4 \not\in T_\sigma;\\
\sign_{\si '}(8)&=\sign_{\si '}(4)\cdot \sign_{\si }(8)=\zeta^2\cdot\zeta =\zeta^3\quad \textrm{for}\quad 8 \not\in T_\sigma;\\
\sign_{\si '}(6)&=\sign_{\si}(6)=1\quad \textrm{for}\quad 6 \in T_\sigma;\\
\sign_{\si '}(5)&=\sign_{\si '}(6)\cdot \sign_{\si}(5)=1\cdot\zeta=\zeta\quad \textrm{for}\quad 8 \not\in T_\sigma;\\
\sign_{\si '}(7)&=\sign_{\si '}(5)\cdot \sign_{\si}(7)=\zeta\cdot 1=\zeta\quad \textrm{for}\quad 7 \not\in T_\sigma.
\end{align*}
Thus we have
$\pi\mapsto \si'=
\bar{\bar{1}} \,\bar{\bar{\bar{3}}} \,9 \,\bar{\bar{2}}\,
\bar{\bar{4}}\,\bar{\bar{\bar{8}}} \,6 \,\bar{5} \,\bar{7}
$. We have
$C^2(\pi)=\{4,7\}=L^{*2}(\pi')$.

Conversely, starting from $\pi'$, we can recover $\sigma$ by
$\varphi^{-1}$ and the signs of $\sigma(i)$ ($i\in [n]$) by
\eqref{sign'}.  As $\sigma = (1 3 9 ) ( 2 4 8) (6 5 7 )$ and  $T_\sigma =
\lbrace 1, 2, 6\rbrace$, we have, for example,
$$
\sign_{\si}(9)=\sign_{\si'}(9)\cdot
\sign_{\si'}^{-1}(3)=1\cdot\zeta=\zeta
$$
for $9\not\in T_\sigma $.

\section{Proof of Theorem~\ref{k-fixe-point} }
We shall give two proofs by using Theorems~9 and 2, respectively.
\subsection{First Proof}
We shall define
a mapping $\varphi: \pi \mapsto \pi'$
from $D_{\ell,n}^m$ to $I_{\ell,n}^m$  in two steps. 
First we establish 
the correspondence   $|\pi|\mapsto |\pi'|$ and then 
determine the  sign  transformation. 
Define the permutation $|\pi'|=|\pi'|(1)\ldots |\pi'|(n)$ such that
$|\pi'|(1)\ldots |\pi'|(m)$ is  the increasing rearrangement of
$|\pi|(1),\ldots, |\pi|(i_m)$
and $|\pi'|(m+1)\ldots |\pi'|(n)=|\pi|(m+1)\ldots |\pi|(n)$.
Conversely, starting from $\pi'\in I_{\ell,n}^m$,
for each $i\in [m]$ we 
construct the cycle
of $|\pi|$  containing $i$ by
$$
(|\pi'|^{-s}(i), \ldots, |\pi'|^{-2}(i), |\pi'|^{-1}(i),i),
$$
where $s$ is the smallest non negative integer such that $|\pi'|^{-s}(i)\in T$, where
$$T :=\{|\pi'|(i),i\in[m] \},
$$
 and by convention $|\pi'|^0(i)=i$. 
 In particular if $i\in [m]\cap T$, then $s=0$ and $i$ is a fixed point of $|\pi|$.
The other cycles remain unaltered. 

For
example, for
$\pi=(1)(2,\bar{7},\bar{\bar{6}})(3,\bar{\bar{5}},9)(4)(\bar{\bar{8}}) \in D_{3,9}^4$ 
(i.e., $n=9$, $\ell =3$, $m=4$), we have 
$|\pi|=(1)(2,7,6)(3,5,9)(4)(8)$ and $|\pi'|=145792683=(1)(2476)(359)(8)$,  so $T=\{1,4,5,7\}$.

Now, we describe  the sign transformation.  For each $i\in [m]$, 
since the letter $i$ of $\pi $ ($m$-isolated-fixed) as well as the letter $\pi(i) $  of $\pi' $ ($m$-increasing-fixed) are uncolored,  the transformation of the signs is obtained by exchanging the sign of $| \pi|(i) $ 
and that of $i$, namely
$$ \sign_{\pi'}(i)=\sign_{\pi}(|\pi|(i))\quad  \text{ and } \quad 
\sign_{\pi}(i)=\sign_{\pi'}(|\pi|(i))=1\quad \forall i \in [m];
$$
the signs of other letters reamain unaltered, i.e.,
$$\sign_{\pi'}(i)=\sign_{\pi}(i) \qquad \forall i \in [n] \setminus (T \cup [m]).
$$

Continuing the above example, we have $\sign_{\pi'}(2)=\sign_{\pi}(\pi(2))=\sign_{\pi}(7)=\zeta^1$; 
$\sign_{\pi'}(3)=\sign_{\pi}(\pi(3))=\sign_{\pi}(5)=\zeta^2$,  hence
$\pi'=1\,4\,5\,7\,9\,\bar{2}\,\bar{\bar{6}}\,\bar{\bar{8}}\,\bar{\bar{3}}$.

\subsection{Second proof }
Let $G_{\ell,n}^m:=G_{\ell,n}^m(0)$ be the set
of permutations in $G_{\ell,n}$ whose fixed points are included in $[m]$.

Let $\pi=(\varepsilon, \sigma)$ be  a permutation in $G_{\ell,n}^m$,
written as a product of disjoint cycles. For each $i\in [m]$,
  let $s$ be the smallest integer $\geq 1$ such that $\sigma^s(i)\in [m]$ and
$w_{\pi}(i)=[\varepsilon_{\sigma(i)}\sigma(i)]\ldots [\varepsilon_{\sigma^{s-1}(i)}\sigma^{s-1}(i)]$.
 Clearly $w_{\pi}(i)=\emptyset$ if $s=1$. Let $\Omega_\pi$ be the product of  cycles of $\pi$
  which have no common point with $\{i\zeta^j| i\in [m], 0\leq j\leq \ell-1\}$ and
  $\pi_m$ be the permutation in $G_{\ell,m}$ obtained from
  $\pi$ by deleting the cycles in $\Omega_\pi$
  and letters in $w_{\pi}(i)$ for $i\in [m]$.
 For example, if
 $\pi= (\bar{\bar{1}}\,\bar{4}\,7\,\bar{\bar{3}}\,2\,\bar{6}\,5)(\bar{8})(\bar{\bar{9}})
 \in G_{3,9}^3$ then $\pi_3=(\overline{\overline{1}}\,\overline{\overline{3}}\,2)$,
 $$w_{\pi}(1)=\bar{4}\,7,\quad w_{\pi}(2)=\bar{6}\,5,\quad
 w_{\pi}(3)=\varnothing \quad \textrm{and}\quad
\Omega_{\pi}=(\bar{8})\,(\bar{\bar{9}}).
$$
Let $T(\pi)=(w_{\pi}(1),w_{\pi}(2),\cdots,w_{\pi}(m),\Omega_{\pi})$ and define
the  relation $\sim$  on $G_{\ell,n}^m$ by
 $$
 \pi_1 \sim \pi_2 \Leftrightarrow T(\pi_1)=T(\pi_2).
 $$
 Clearly this is an equivalence relation. To determine the equivalence class ${\mathcal C}_\pi$
 of each permutation $\pi\in G_{\ell,n}^m$
 we consider the mapping $\theta: (\tau,\pi) \mapsto \theta(\tau, \pi)$ from
$G_{\ell,m}\times G_{\ell,n}^m$ to $G_{\ell,n}^m$,  where the cyclic factorization of
 $\theta(\tau,\pi)$ is obtained
by inserting the word  $w_{\pi}(i)$ after
each letter $i\zeta^j$  appearing in a cycle of
$\tau$  for each $i\in [m]$ and some $j: 0\leq j\leq \ell-1$, and then  add the cycles in $\Omega_\pi$.
For example, if
$\pi= (\bar{\bar{1}}\,\bar{4}\,7\bar{\bar{3}}\,2\bar{6}\,5)(\bar{8})(\bar{\bar{9}})\in G_{3,9}^3$
and  $\tau= (\bar{1}\,\bar{\bar{2}})(3)\in G_{3,3}$  then
$$\theta(\tau,\pi)=(\bar{1}\,\bar{4}\,7\,\bar{\bar{2}}\,\bar{6}\,5)(3)(\bar{8})(\bar{\bar{9}}).
$$
Clearly ${\mathcal C}_\pi=\{\theta(\tau,\pi)|\tau \in  G_{\ell,m}\}$. Indeed,
by definition $\theta(\tau,\pi)\sim \pi$ for each $\tau\in G_{\ell,m}$ and $\pi\in G_{\ell,n}^m$ and
conversely, if $\pi'\sim \pi$ then $\pi'=\theta(\pi_m',\pi)$
for $T(\pi')=T(\pi)$. Moreover, suppose $\theta(\tau,\pi)=\theta(\tau',\pi)=\pi'$
 for $\tau, \tau'\in G_{\ell,m}$, then
$\tau=\tau'=\pi'_m$. Hence the cardinality of each equivalence class is $\ell^mm!$ and, by Theorem~2
 the number of equivalence classes equals $d_{\ell,n}^m=g_{\ell,n}^m/\ell^mm!$.
Choosing  $\theta(\iota,\pi)$ as the representative
of the class  ${\mathcal C}_\pi$, where $\iota$ is the identity of $G_{\ell,m}$,
yields the desired result.

\section{Generating functions and further recurrence relations}
For any function $f:\Z\to \mathbb{C}$ introduce  the difference
operator $\Delta{f(n)}=f(n)-f(n-1)$. Then it is easy to see by
induction on $N\geq 0$ that
\begin{align}\label{fdiff}
\Delta^Nf(n)=\sum_{i=0}^N(-1)^i{N\choose
i}f(n-i)=\sum_{i=0}^N(-1)^{N-i} {N\choose i}f(n-N+i).
\end{align}

\begin{prop} For $m\geq 0$ the following identities hold true:
\begin{align}
g_{\ell,n+m}^m=\sum_{i=0}^n&(-1)^{n-i}{n\choose i}\ell^{m+i}(m+i)!,\label{derg}\\
\sum_{n \geq 0 }g_{\ell,n+m}^m \frac{u^n}{n!}&
=\frac{\ell^mm!\exp(-u)}{(1-\ell u)^{m+1}},\label{gfeq}\\
\sum_{m, n \geq 0 }g_{\ell,n+m}^m \frac{x^m}{m!}\frac{u^n}{n!}&=
\frac{\exp(-u)}{1-\ell x-\ell u}.\label{gfeq2}
\end{align}
\end{prop}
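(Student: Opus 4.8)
The plan is to establish the three identities in order, using the first as the foundation for the other two. The key observation is that the right-hand side of \eqref{derg} is an iterated finite difference of the diagonal entries of the table. Writing $h(j):=g_{\ell,j}^j=\ell^j j!$, formula \eqref{fdiff} with $N=n$, $f=h$ and argument $n+m$ gives $\Delta^n h(n+m)=\sum_{i=0}^n(-1)^{n-i}\binom{n}{i}h(m+i)$, which is precisely the claimed right-hand side of \eqref{derg}. Thus \eqref{derg} is equivalent to the compact statement $g_{\ell,n+m}^m=\Delta^n h(n+m)$, and this is what I would prove first.

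I would prove $g_{\ell,n+m}^m=\Delta^n h(n+m)$ by induction on $n$, simultaneously for all $m\geq 0$. The base case $n=0$ is just $g_{\ell,m}^m=h(m)$, which holds by definition. For the inductive step, I apply the recurrence \eqref{eulerB} at the index $(n+1+m,m)$ to get $g_{\ell,n+1+m}^m=g_{\ell,n+1+m}^{m+1}-g_{\ell,n+m}^m$. The first term equals $g_{\ell,n+(m+1)}^{m+1}$, so by the induction hypothesis applied with $m$ replaced by $m+1$ it equals $\Delta^n h(n+m+1)$, while the second term equals $\Delta^n h(n+m)$ by the induction hypothesis for $m$. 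Their difference is $\Delta^n h(n+m+1)-\Delta^n h(n+m)=\Delta^{n+1}h(n+m+1)$ by the very definition of $\Delta$, which is the desired value. This closes the induction and proves \eqref{derg}.

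Next I would read \eqref{gfeq} off \eqref{derg} as a binomial convolution. Writing the right-hand side of \eqref{derg} as $\sum_{i=0}^n\binom{n}{i}a_i b_{n-i}$ with $a_i=\ell^{m+i}(m+i)!$ and $b_j=(-1)^j$, the exponential generating function of $g_{\ell,n+m}^m$ in $n$ is the product of the exponential generating functions of $(a_i)$ and $(b_j)$. The latter is $\exp(-u)$, while for the former I use $\sum_{i\geq 0}\binom{m+i}{m}z^i=(1-z)^{-(m+1)}$ to obtain $\sum_{i\geq 0}\ell^{m+i}(m+i)!\,u^i/i!=\ell^m m!(1-\ell u)^{-(m+1)}$. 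Multiplying the two factors yields \eqref{gfeq}. Finally, \eqref{gfeq2} follows by multiplying \eqref{gfeq} by $x^m/m!$ and summing over $m$: the sum $\sum_{m\geq 0}(\ell x)^m(1-\ell u)^{-(m+1)}$ is geometric and collapses to $(1-\ell x-\ell u)^{-1}$, giving the stated closed form.

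The only step requiring care is the induction for \eqref{derg}: one must run it over all $m$ at once so that the induction hypothesis is available both at $m$ and at $m+1$, and one must match the index shifts in \eqref{eulerB} and \eqref{fdiff} precisely. Once \eqref{derg} is in hand, the remaining two identities are routine generating-function manipulations with no real obstacle.
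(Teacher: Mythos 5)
Your proposal is correct and follows essentially the same route as the paper: the paper also obtains \eqref{derg} by recognizing $g_{\ell,n+m}^m$ as the iterated difference $\Delta^{n}f(n+m)$ of $f(j)=\ell^j j!$ (your induction just makes explicit what the paper states as immediate), and it then derives \eqref{gfeq} and \eqref{gfeq2} by the same convolution and geometric-series manipulations. No gaps; the argument is sound as written.
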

\begin{proof}

Setting $f(n)=g_{\ell,n}^{n}$ then
$g_{\ell,n+m}^{n+m-i}=\Delta^i{f(n+m)}$ for $i\geq 0$. It follows
from \eqref{fdiff} that
\begin{align}
g_{\ell,n+m}^m=\Delta^{n}{f(n+m)}=\sum_{i=0}^n(-1)^{n-i}{n\choose
i}\ell^{m+i}(m+i)!.
\end{align}
Multiplying the above identity by $u^n/n!$ and summing
 over $n\geq 0$  we obtain
\begin{align*}
\sum_{n \geq 0 }g_{\ell,n+m}^m \frac{u^n}{n!}=\ell^mm!\sum_{n,i\geq
0}(-1)^{n-i}{m+i\choose i}\frac{\ell^iu^n}{(n-i)!}.
\end{align*}
Shifting $n$ to $n+i$ yields
\begin{align*}
\sum_{n \geq 0 }g_{\ell,n+m}^m \frac{u^n}{n!}=\ell^mm!\left(\sum_{n\geq
0}(-1)^{n} \frac{u^{n}}{n!}\right) \cdot \left(\sum_{i\geq
0}{m+i\choose i}(\ell u)^i\right),
\end{align*}
which is clearly equal to the right-hand side of \eqref{gfeq}.
Finally multiplying  \eqref{gfeq} by  $x^m/m!$ and summing
over  $m\geq 0$ yields \eqref{gfeq2}.
\end{proof}

Setting  $m=0$ in \eqref{derg} yields  immediately formula \eqref{formuleD}.

\begin{prop} \label{prop-1}
For $\ell\geq 0$ and $0 \leq m \leq n$ there hold
\begin{align}
g_{\ell,n}^m&=(\ell n-1)g_{\ell,n-1}^m+\ell(n-m-1)g_{\ell,n-2}^m\qquad (n\geq 2);\label{rec1}\\
g_{\ell,n}^m&=\ell(n-m)g_{\ell,n-1}^m+\ell mg_{\ell,n-1}^{m-1}\qquad (m\geq 1,\; n\geq 1); \label{rec2}\\
g_{\ell,n}^m&=\ell ng_{\ell,n-1}^{m}-\ell mg_{\ell,n-2}^{m-1} \qquad (m\geq 1,\;
n\geq 2); \label{rec3}
\end{align}
where $g_{\ell,0}^0=1$, $g_{\ell,1}^0=\ell-1$ and
$g_{\ell,1}^1=\ell$.
\end{prop}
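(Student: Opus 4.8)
The plan is to deduce all three recurrences from the generating functions already established in \eqref{gfeq} and \eqref{gfeq2}, obtaining \eqref{rec1} and \eqref{rec2} by coefficient extraction and then \eqref{rec3} as a purely algebraic consequence of \eqref{rec2} and the defining relation \eqref{eulerB}. For \eqref{rec1} I would fix $m$ and work with the single-variable exponential generating function $F_m(u):=\sum_{p\ge0}g_{\ell,m+p}^m\,u^p/p!$, which by \eqref{gfeq} equals $\ell^m m!\,e^{-u}(1-\ell u)^{-(m+1)}$. Logarithmic differentiation gives the first-order linear ODE
\[
(1-\ell u)\,F_m'(u)=\bigl(\ell u+(m+1)\ell-1\bigr)F_m(u).
\]
Extracting the coefficient of $u^q/q!$ on both sides, using that $[u^q/q!]\,uH(u)=q\,[u^{q-1}/(q-1)!]\,H(u)$ for any series $H$, turns this identity into the three-term recurrence $g_{\ell,m+q+1}^m=(\ell q+(m+1)\ell-1)\,g_{\ell,m+q}^m+\ell q\,g_{\ell,m+q-1}^m$. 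Setting $n=m+q+1$ collapses the bracket $\ell q+(m+1)\ell-1$ to $\ell n-1$ and $\ell q$ to $\ell(n-m-1)$, which is exactly \eqref{rec1}.

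For \eqref{rec2} I would keep both variables and simply clear the denominator in \eqref{gfeq2}: writing $G(x,u)=e^{-u}/(1-\ell x-\ell u)$, one has $(1-\ell x-\ell u)\,G=e^{-u}$. Comparing the coefficient of $x^m u^n/(m!\,n!)$ on each side gives
$g_{\ell,m+n}^m-\ell n\,g_{\ell,m+n-1}^m-\ell m\,g_{\ell,m+n-1}^{m-1}=[m=0]\,(-1)^n$,
so that for every $m\ge1$ the right-hand side vanishes; putting $N=m+n$ then yields \eqref{rec2}. As a bonus, the $m=0$ row of this same identity reproduces \eqref{drec4}, which serves as a consistency check.

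Finally, for \eqref{rec3} I would subtract \eqref{rec2} from the claimed identity: the difference of the two right-hand sides equals $\ell m\bigl(g_{\ell,n-1}^m-g_{\ell,n-1}^{m-1}-g_{\ell,n-2}^{m-1}\bigr)$, so \eqref{rec3} is equivalent to $g_{\ell,n-1}^m=g_{\ell,n-1}^{m-1}+g_{\ell,n-2}^{m-1}$. But rewriting \eqref{eulerB} as $g_{\ell,N}^{m}=g_{\ell,N}^{m-1}+g_{\ell,N-1}^{m-1}$ and specializing $N=n-1$ is precisely this identity, so \eqref{rec3} drops out once \eqref{rec2} is in hand.

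I expect the main obstacle to be bookkeeping rather than anything conceptual: the delicate point is the repeated change of index between the ``offset'' variables $(m,q)$ or $(m,n)$ in which the generating functions are naturally graded and the full subscript $n=m+q+1$ (respectively $N=m+n$) appearing in the statement. Care is also needed to check that the degenerate terms at $q=0$ (where the coefficient $\ell q$ kills an out-of-range entry) and the stated base values $g_{\ell,0}^0=1$, $g_{\ell,1}^0=\ell-1$, $g_{\ell,1}^1=\ell$ are consistent with the recurrences along the boundary of the triangular range $0\le m\le n$.
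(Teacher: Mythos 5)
Your proposal is correct and follows essentially the same route as the paper: the paper likewise proves \eqref{rec1} by differentiating \eqref{gfeq} to get the ODE $(1-\ell u)F'(u)=[\ell(m+1)-1+\ell u]F(u)$ and extracting coefficients, proves \eqref{rec2} by clearing the factor $(1-\ell u)$ from the generating function (using the univariate \eqref{gfeq} where you use the bivariate \eqref{gfeq2}, an immaterial difference), and obtains \eqref{rec3} by combining \eqref{rec2} with the defining relation \eqref{eulerB}, exactly as you do. Your remark that the $m=0$ case of the cleared identity recovers \eqref{drec4} is a nice consistency check but does not change the argument.
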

\begin{proof}
Let $F(u)$ be the left-hand side of \eqref{gfeq}. Differentiating
$F(u)$ and using the right-hand side of \eqref{gfeq}  we get
\begin{align}\label{keyeq}
(1-\ell u)F'(u)=[\ell (m+1)-1+\ell u]F(u).
\end{align}
Equating  the coefficients of $u^n/n!$ in \eqref{keyeq} yields
$$
g_{\ell,n+m+1}^m=[\ell(m+n+1)-1]g_{\ell,n+m}^m+\ell ng_{\ell,n+m-1}^m,
$$
which gives \eqref{rec1} by shifting  $n+m+1$ to $n$.

Next, multiplying the two sides of \eqref{gfeq} by $1-\ell u$ gives
\begin{align}
(1-\ell u)\sum_{n \geq 0 }g_{\ell,n+m}^m
\frac{u^n}{n!}=\frac{\ell^mm!\exp(-u)}{(1-\ell u)^{m}}=\ell m \sum_{n \geq 0
}g_{\ell,n+m-1}^{m-1} \frac{u^n}{n!}.
\end{align}
Equating the coefficients of $u^n/n!$ yields
\begin{align}
g_{\ell,n+m}^m-\ell ng_{\ell,n+m-1}^m=\ell mg_{\ell,n+m-1}^{m-1},
\end{align}
which is \eqref{rec2} by shifting $n+m$ to $n$.

Finally, we derive  \eqref{rec3} from \eqref{rec2} and
\eqref{eulerB}:
\begin{align*}
g_{\ell,n}^m&=\ell ng_{\ell,n-1}^{m}-\ell m(g_{\ell,n-1}^m-g_{\ell,n-1}^{m-1})\\
&=\ell ng_{\ell,n-1}^{m}-\ell mg_{\ell,n-2}^{m-1}.
\end{align*}
 The proof is thus completed.
\end{proof}

It is easy to convert the above relations for $g_{\ell,n}^m$ to
those for $d_{\ell,n}^m$.
\begin{prop} For $\ell\geq 0$ and $0\leq m \leq  n$ we have
\begin{align}
d_{\ell,n}^m&=(\ell n-1)d_{\ell,n-1}^m+\ell(n-m-1)d_{\ell,n-2}^m\qquad (n\geq 2);\label{drec1}\\
d_{\ell,n}^m&=\ell(n-m)d_{\ell,n-1}^m+d_{\ell,n-1}^{m-1}\qquad (m\geq 1,\; n\geq 1);\label{drec2}\\
d_{\ell,n}^m&+d_{\ell,n-2}^{m-1}=\ell nd_{\ell,n-1}^{m}\qquad (m\geq 1,\;
n\geq 2),\label{drec3}
\end{align}
where $d_{\ell,0}^0=1$, $d_{\ell,1}^0=\ell-1$ and
$d_{\ell,1}^1=1$.
\end{prop}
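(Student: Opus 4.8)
The plan is to derive all three identities directly from their $g$-counterparts in Proposition~\ref{prop-1} via the defining normalization $g_{\ell,n}^m=\ell^m m!\,d_{\ell,n}^m$. First I would rewrite each relation of Proposition~\ref{prop-1} by substituting this expression for every occurrence of $g$ and then cancelling the common factors.

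For \eqref{drec1} this is immediate: every term in \eqref{rec1} carries the same upper index $m$, so each equals $\ell^m m!$ times the corresponding $d$; dividing the whole relation through by $\ell^m m!$ converts \eqref{rec1} verbatim into \eqref{drec1}, and nothing further is needed.

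The only point requiring care is the presence of terms with upper index $m-1$ in \eqref{rec2} and \eqref{rec3}. Such a term is normalized by $\ell^{m-1}(m-1)!$ rather than by $\ell^m m!$, and the two normalizations differ by exactly the ratio $\ell^m m!/(\ell^{m-1}(m-1)!)=\ell m$. In \eqref{rec2} the relevant term is $\ell m\,g_{\ell,n-1}^{m-1}=\ell m\cdot\ell^{m-1}(m-1)!\,d_{\ell,n-1}^{m-1}=\ell^m m!\,d_{\ell,n-1}^{m-1}$, so after dividing the whole relation by $\ell^m m!$ the coefficient $\ell m$ disappears and one is left with \eqref{drec2}. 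The identical cancellation applied to $\ell m\,g_{\ell,n-2}^{m-1}$ in \eqref{rec3} produces $\ell^m m!\,d_{\ell,n-2}^{m-1}$ and, after moving this term to the left-hand side, yields \eqref{drec3}.

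Finally I would record the initial values: from $g_{\ell,0}^0=1$, $g_{\ell,1}^0=\ell-1$ and $g_{\ell,1}^1=\ell$ one reads off $d_{\ell,0}^0=1$, $d_{\ell,1}^0=\ell-1$, and $d_{\ell,1}^1=\ell/\ell=1$. There is no genuine obstacle in this argument — the content is entirely bookkeeping — and the single thing to watch is precisely the observation above: the coefficient $\ell m$ standing in front of the lower-index terms of Proposition~\ref{prop-1} is exactly what compensates for the mismatch between the normalizing factors $\ell^m m!$ and $\ell^{m-1}(m-1)!$, which is also why the three $d$-relations appear slightly simpler than their $g$-analogues.
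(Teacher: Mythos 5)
Your proposal is correct and is exactly the paper's approach: the paper proves this proposition by noting that the three relations "follow directly from Proposition~\ref{prop-1}" via the normalization $g_{\ell,n}^m=\ell^m m!\,d_{\ell,n}^m$, which is precisely the bookkeeping you carry out. Your write-up merely makes explicit the cancellation of the factor $\ell m$ against the ratio $\ell^m m!/(\ell^{m-1}(m-1)!)$ that the paper leaves implicit.
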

\begin{proof}
The equations \eqref{drec1}, \eqref{drec2} and \eqref{drec3}
follow directly from Proposition~\ref{prop-1}.
\end{proof}

\section{Combinatorial proofs of three recurrence relations}
Using the combinatorial interpretation for $d_{\ell,n}^n$ in Theorem~11
we now give combinatorial interpretations of \eqref{deulerB},  (\ref{drec4}) and \eqref{drec3}
by  generalizing the proofs of Rakotondrajao \cite{Ra2},
 which correspond to the $\ell=1$ case.
\subsection{Combinatorial proof of \eqref{deulerB}}
We shall prove that the cardinality of
$D_{\ell,n}^m$ satisfies the following recurrence:
\begin{align}
d_{\ell,n}^n=1\quad\textrm{and}\quad
d_{\ell,n}^{m-1}+d_{\ell,n-1}^{m-1}=\ell m\, d_{\ell,n}^{m}\qquad
(1\leq m\leq n).\label{deulerB'}
\end{align}
First, the identity permutation is the only
$n$-isolated-fixed permutation in
$G_{\ell,n}$, so $d_{\ell,n}^n=1$.
To prove \eqref{deulerB'} we construct a bijection
$\vartheta: \si
\longmapsto (\epsilon,\alpha,\si')$
from $ D_{\ell,n-1}^{m-1} \cup D_{\ell,n}^{m-1}$ to
 $\mathcal{C}_\ell \times [m] \times D_{\ell,n}^m $
as follows:

 Let
$\sigma=|\pi|$ and factorize $\pi$ into disjoint cycles.
\begin{itemize}
\item[1.] If $\si \in D_{\ell,n-1}^{m-1}$,
  then $\epsilon=1$, $\alpha=m$,
 the  cycles of $\si'$ are obtained from those of $\si$
 by  substituting
  $\zeta^ji$ by $\zeta^ji +1$  if $i \geq m$ and then adding the cycle $(m)$.
\item[2.] If $\si \in D_{\ell,n}^{m-1}$, let $C_m$ be the cycle of $\sigma$ containing
$m$. Then $\epsilon=\sign_{\si}(m)$ and $\alpha$ is the smallest
integer in $C_m$; let  $q$ be the smallest integer such that
$\sigma^q(m)=\alpha$; then $\sigma'$ is obtained from $\sigma$ by
deleting the letters $m,\,\sigma(m),\,\ldots,\,\sigma^{q-1}(m)$
from $C_m$ and creating a new cycle
 $(m\,\sigma(m)\,\cdots\,\sigma^{q-1}(m))$. Finally,
 define  the sign of  $i\in [n]$ in $\pi'$
by
$$
\sign_{\si'}(i)=\left\{%
\begin{array}{ll}
    \sign_{\si}(i), & \hbox{ if $i\not =m$;} \\
    1, & \hbox{if $i =m$.} \\
\end{array}%
\right.
$$
\end{itemize}
For example, let $\ell=3,\, n=9$ and $m=6$. If  $\si=
(1\bar{\bar{6}})(2)(3)(4)(5)(\bar{7}\bar{\bar{8}})\in D_{3,8}^5$,
then
$$\epsilon=1;\alpha=6 \quad{and}\quad
\si'=(1\bar{\bar{7}})(2)(3)(4)(5)(6)(\bar{8}\bar{\bar{9}});
$$
if $\si=(1)(2\bar{9}\bar{6}\bar{\bar{8}})(3)(4)(5)(\bar{7}) \in
D_{3,9}^5$ then
$$
\alpha=2, \quad \epsilon=\zeta\quad{and}\quad
\si'=(1)(2\bar{9})(3)(4)(5)(\bar{7})(6\bar{\bar{8}}).
$$

It remains to show that $\vartheta$ is a bijection.
Given $(\epsilon,\alpha,\si')$ let $\sigma'=|\pi'|$.
We define the inverse $\vartheta^{-1}: (\epsilon,m,\si')\mapsto \si$ as follows:
\begin{enumerate}
\item  If $\alpha=m$; $\epsilon=1$ and $\si'(m)=m$ then
the cycles of $\si$ are obtained by deleting the  cycle $(m)$ and replacing
 $\zeta^ji$ by $\zeta^ji -1$  if $i \geq m$ in $\pi'$.
\item  If $\alpha=m$; $\epsilon  = 1 $ and $\si'(m) \not =m$ then
$\pi=\pi'$.
\item  If $\alpha=m$;
$\epsilon \not = 1 $ then  we get  $\pi$ from $\pi'$ by replacing
$m$  by $\epsilon\, m$.

\item  If $\alpha<m$; $\sigma$ is obtained from $\sigma'$ by
removing the cycle which contains $m$ and then inserting
 the word $m\sigma'(m)\sigma'^2(m)...\sigma'^{q-1}(m)$, where $\sigma'^q(m)=m$,
  in the cycle which
 contains the integer $\alpha$ just before the integer $\alpha$.
In other words, we define $m=\sigma(\sigma'^{-1}(\alpha))$,
$\sigma^j(m)=\sigma'^{j}(m)$ for $1\leq j\leq q-1$ and
$\sigma^q(m)=\alpha$.
 Finally
 define $\sign_{\si}(i)
 =\sign_{\si'}(i)$ for $i\not=m$ and $\sign_{\si}(m)=\epsilon$.

\end{enumerate}

 To see that this is indeed the inverse of $\vartheta$ we just
 note the following simple facts:
\begin{itemize}
  \item If $\si \in D_{\ell,n-1}^{m-1}$  then $\epsilon=1$, $\alpha=m$ and $m \in
  FIX(\pi')$.  We have
  $\vartheta(\pi) \in E_1= \{(1,m,\pi'),  m \in FIX(\pi'), \pi' \in   D_{\ell,n}^{m}
  \}$.
  \item If $\pi \in D_{\ell,n}^{m-1}\bigcap D_{\ell,n}^{m}$
  then $\epsilon=1$, $\alpha=m$ and $m \not \in FIX(\pi')$. In this case $\pi'=\pi$
 we have
$\vartheta(\pi) \in E_2= \{(1,m,\pi') ,  m \not \in FIX(\pi'),
\pi' \in   D_{\ell,n}^{m} \}$.
  \item If $ m \not \in $ cycle of $\sigma$ containing $i<m$ and $\sign_{\pi}(m) \not = 1$
  then $\alpha=m$, $\epsilon  \not = 1$;  and $\pi'$ is obtained from $\pi$
  by just replacing $\epsilon m$ by $m$.  We have
       $\vartheta(\pi) \in E_3= \{(\epsilon,m,\pi')$,
       $\epsilon \not =1$, $\pi \in   D_{\ell,n}^{m} \}$.
  \item If $ m  \in $ cycle of  $\sigma$ containing $i<m$ then
  $\alpha<m$.
    In this case the image $\pi'$ is defined by the second case of the
     construction of $\vartheta$.
 We have $\vartheta(\pi) \in E_4=\mathcal{C}_\ell \times [m-1] \times
D_{\ell,n}^m $.
\end{itemize}
Clearly  $\{E_1, E_2,E_3,E_4\}$ is a  partition of $D_{\ell,n}^m $.

\subsection{Combinatorial proof of (\ref{drec4})}
We shall prove the following version of (\ref{drec4}):
\begin{gather*}
\ell n d_{\ell,n-1}^0-1=d_{\ell,n}^0\qquad \textrm{if $n$ is odd},\\
\ell n d_{\ell,n-1}^0=d_{\ell,n}^0-1\qquad \textrm{if $n$ is even}.
\end{gather*}

Denote by $\mathcal{D}_{\ell,n}$  the set of derangements in $G_{\ell,n}$. Let
$E_n=\emptyset$  if $n$ is odd and
$E_n=\{(1 \, \,2)(3 \, \, 4)\cdots(n-1 \,\, n) \}$ if $n$ is even.
Introduce also $F_n=\emptyset$ if $n$ is even and
$F_n=\{1\}\times \{n\}\times E_{n-1}$ if $n$ is odd.
We are going to define a mapping $\tau_\ell: (\varepsilon,k,\pi)\longmapsto \pi'$
from
$\left(\mathcal{C}_{\ell}\times [n]\times
\mathcal{D}_{\ell,n-1}\right)\setminus F_n$ to
$\mathcal{D}_{\ell,n} \setminus E_n$, which implies the above identities.

Factorize $\pi$ into disjoint cycles.
We construct the cyclic factorization of $\pi'$ by
distinguishing several cases and by giving an example  in
$G_{4,9}$ for each case.
Let $c(k)$ be
the length of the cycle  of $\pi$ containing $k$ and write $\widehat{k}=\sign_{\pi}(k)\cdot k$.

\begin{itemize}
  \item [1.] If $k < n $,
  we obtain $\pi'$ by inserting $\varepsilon \, n$ just after $\widehat{k}$ in a cycle of $\pi$.\\
Example: $\varepsilon=\zeta^3, \, k=3, \,
\pi=(\bar{\bar{1}} \, \bar{4} \, 2)(\bar{\bar{3}})(\bar{5} \,\bar{\bar{6}}\,8\, \bar{\bar{\bar{7}}}) $
then $\pi'=(\bar{\bar{1}} \, \bar{4} \, 2)
(\bar{\bar{\bar{9}}} \bar{\bar{3}})(\bar{5} \,\bar{\bar{6}}\,8\, \bar{\bar{\bar{7}}}) $.
  \item [2.] If $k = n $ and $\varepsilon \not= 1$,
 we obtain $\pi'$ by creating the cycle  $(\varepsilon n)$.\\
Example: $\varepsilon=\zeta^3, \, k=9, \, \pi=(\bar{\bar{1}} \, \bar{4} \, 2)
(\bar{\bar{3}})(\bar{5} \,\bar{\bar{6}}\,8\, \bar{\bar{\bar{7}}}) $ then $\pi'=(\bar{\bar{1}} \, \bar{4} \, 2)
( \bar{\bar{3}})(\bar{5} \,\bar{\bar{6}}\,8\, \bar{\bar{\bar{7}}})(\bar{\bar{\bar{9}}}) $.
\item [3.] Suppose  $k= n $ and $\varepsilon = 1$.
 Let $p\geq 0$ be the  smallest integer such that
the transposition
$(2p+1, \,2p+2)$ is not a cycle of
 $\pi$.
In all the examples of this part we take $p=2$.
 \begin{itemize}
        \item [3.1] If $\sign_{\pi}(2p+1)=1$ then
            \begin{itemize}
            \item [3.1.1] If $2p+2$ is
            a  1-circular succession of $|\pi|$
            then $\pi'$ is obtained by
            deleting $2p+1$ and  creating
            the cycle $(n, 2p+1)$.\\
Example: $\pi=(1 \,2)(3\, 4)(5\, \bar{\bar{6}})
(\bar{7} \, \bar{\bar{\bar{8}}})$
then $\pi'=(1 \,2)(3\, 4)( \bar{\bar{6}})
(\bar{7} \, \bar{\bar{\bar{8}}})(9\, 5)$.
            \item [3.1.2] If $2p+2$ is not
            a  1-circular succession of $|\pi|$ then:
\begin{itemize}
                  \item [a)] If $c(2p+1)=2$
                   then  $\pi'$ is obtained by  deleting the
                   cycle $(2p+1,\pi(2p+1))$ and inserting
                   $2p+1$ just before the letter $\widehat{2p+2}$ and creating the cycle
$( \lambda n, |\pi|(2p+1))$ where
$\lambda=\sign_{\pi}(|\pi|(2p+1))$.\\
Example: $\pi=(1 2)(3 4)(5 \bar{\bar{\bar{8}}})
(\bar{\bar{6}}\bar{7} \, )$
 then $\pi'=(1 2)(3 4)(5 \, \bar{\bar{6}}\bar{7} \, )
 ( \bar{\bar{\bar{9}}} \, 8)$.
  \item [b)] If
                   $c(2p+1)>2$. Let $a= |\pi|^{-1}(2p+1)$ and
                   $\xi = \sign_{\pi}(a)$ then $\pi'$ is obtained by
                    deleting the letter $\xi\cdot a$ and creating
                    the cycle $(\xi n, a)$.\\
Example: $\pi=(1 2)(3 4)(5 \bar{\bar{\bar{8}}}
\bar{\bar{6}}\bar{7} \, )$
then $\pi'=(1 2)(3 4)(5
\bar{\bar{\bar{8}}}\bar{\bar{6}})(\bar{9} \, 7)$.
                \end{itemize}
  \end{itemize}
        \item [3.2] If $\sign_{\pi}(2p+1)=\gamma\not=1$ then
            \begin{itemize}
             \item [3.2.1] If  $c(2p+1)=1$ then  $\pi'$ is
             obtained by  deleting the letter
             $ \gamma\cdot (2p+1)$ and creating the
             cycle $(\gamma n,2p+1)$.\\
Example: $\pi=(1 2)(3 4)(\bar{\bar{5}})
(\bar{6} \, \bar{\bar{8}} \,7)$ then $\pi'= (1 2)(3 4)
(\bar{\bar{9}}\, 5)(\bar{6} \, \bar{\bar{8}} \,7) $.
             \item [3.2.2] If $c(2p+1)\not=1$.
             Let $a= |\pi|^{-1}(2p+1)$
             and $\gamma = \sign_{\pi}(a)$
             then $\pi'$ is obtained by  deleting the letter
             $\gamma \cdot a$
             and creating the cycle $( \gamma  n, a)$.\\
Example: $\pi=(1 2)(3 4)(\bar{\bar{5}} \,
\bar{\bar{8}} \,7\, \bar{6})$
then $\pi'= (1 2)(3 4)(\bar{\bar{5}} \,
\bar{\bar{8}} \,7\, )(\bar{9}\,6)$.
            \end{itemize}
             \end{itemize}
            \end{itemize}

Here is the inverse algorithm of $\tau_\ell: \pi'\mapsto (\varepsilon, k, \pi)$.
Denote by $c(n)$ the length of the cycle of $\pi'$ containing
 $n$. In what follows we write $\rho=\sign_{\pi'}(n)$
 and $\widetilde{k}=\sign_{\pi'}(k)\cdot k$.

\begin{itemize}
  \item If $c(n)\geq 3$ or $c(n)=1$ or $c(n)=2$ and
  $\sign_{\pi'}(|\pi'|(n))\neq 1$ then
  $\varepsilon= \sign_{\pi'}(n)$ and $k=|\pi'|^{-1}(n)$ and
  we obtain  $\pi$ by deleting the letter $\widetilde{n}$.
  \item If $c(n)=2$ and $\sign_{\pi'}(|\pi'|(n))= 1$
   then $\varepsilon=1$ and $k=n$.
    Let $p$ be the smallest integer such that
     the transposition $(2p+1, \,2p+2)$
is not a cycle of $\pi'$.

\begin{itemize}
  \item [a.] If $\pi'(n)=2p+1$ and $\rho=1$ then
  we delete the cycle containing $n$ and insert
  the letter $2p+1$  just before the letter $\widetilde{2p+2}$.
  \item [b.] If $\pi'(n)\not=2p+1$ and $\sign_{\pi'}(2p+1)=1$ and $|\pi'|(2p+1)=2p+2$,
   we first delete $2p+1$ and the
    cycle containing $n$, then
  create the cycle $(\rho\cdot \pi'(n), 2p+1)$.
  \item [c.] If $|\pi'|(2p+1)\not=2p+2$ and $\pi'(n)\not=2p+1$ then we delete the
   cycle containing  $n$ and then insert the  letter
    $\rho\cdot \pi'(n) $
  before the letter $\widetilde{2p+1}$.
  \item [d.] If $\pi'(n)=2p+1$ and $\rho\not=1$ then we delete
  the cycle containing $n$ and create the  cycle containing the
  single letter $2p+1$ with the sign
  $\rho$.
\end{itemize}
 \end{itemize}

For example,  the mapping
 $\tau_2:
 \left(\mathcal{C}_{2} \times [3]\times\mathcal{D}_{2,2}\right)
 \setminus F_3\longrightarrow \mathcal{D}_{2,3} \setminus E_3$, where
  $E_3=\emptyset$ and $F_3=(1,3,(1\,2))$, is given in the following table.
\begin{center}
\begin{tabular}{|c|c|c|c|c|c|c|c|c|c|}
  \hline
  &&&&&\vspace{-10 pt}&\\
$\pi \setminus (\varepsilon, k)$& $(1,1)$ & $(1,2)$ & $(1,3)$& $(\zeta,1)$ & $(\zeta,2)$ & $(\zeta,3)$\\
\hline
&&&&&\vspace{-10 pt}&\\
(12) &  (132) & (123) &  &
  $(1\bar{3}2)$ & $(12\bar{3})$ & $(12)(\bar{3})$ \\
  \hline
&\vspace{-10 pt}&&&&&\\
$(\bar{1}2)$&$(\bar{1}32)$&$(\bar{1}23)$& $(\bar{1})(3 2)$&$(\bar{1}\bar{3}2)$&$(\bar{1}2\bar{3})$&$(\bar{1}2)(\bar{3})$\\
\hline
&&&&\vspace{-10 pt}&&\\
$(1\bar{2})$&$(13\bar{2})$& $(1\bar{2}3)$&$(1 3)(\bar{2})$&
$(1 \bar{3}\bar{2})$&$(1 \bar{2}\bar{3})$&$(1\bar{2})(\bar{3})$\\
\hline
&&\vspace{-10 pt}&&&&\\
 $(\bar{1}\bar{2})$&$(\bar{1}3\bar{2})$&
 $(\bar{1}\bar{2}3)$&$(\bar{3} 2)(\bar{1})$&$(\bar{1} \bar{3}\bar{2})$&$(\bar{1} \bar{2}\bar{3})$&$(\bar{1}\bar{2})(\bar{3})$\\
\hline
&&\vspace{-10 pt}&&&&\\
 $(\bar{1})(\bar{2})$&$(\bar{1}3)(\bar{2})$&$(\bar{1})
 (\bar{2}3)$&$(\bar{3} 1)(\bar{2})$&$(\bar{1} \bar{3})(\bar{2})$&$(\bar{1})( \bar{2}\bar{3})$&$(\bar{1})(\bar{2})(\bar{3})$\\
\hline
\end{tabular}
\end{center}
\medskip
\subsection{Combinatorial proof of \eqref{drec3}}
By Theorem~\ref{k-fixe-point}
 the coefficient $d_{\ell,n}^m$ equals the cardinality of $D_{\ell,n}^m$.
We are going to establish a bijection
$\Phi: (\rho,\alpha,\si ) \longmapsto \si'$ from
$\mathcal{C}_\ell\times [n]\times D_{\ell,n-1}^m$ to
$D_{\ell,n}^m \cup D_{\ell,n-2}^{m-1}$.

Let $\sigma=|\pi|$ and $\sigma'=|\pi'|$. The cyclic factorization of  $\pi'$
is obtained from  that of
 $\pi'$ as follows:
\begin{enumerate}
\item If $\alpha=n$, $\rho=1$ and $1 \in FIX (\si)$,
 we get  $\si' \in D_{\ell,n-2}^{m-1} $ by deleting
the cycle (1) and decreasing
all other letters by 1.
\item If $\alpha=n$  and $\rho \not = 1$ then we create the  cycle $(\rho n)$.
In this case $\si' \in D_{\ell,n}^m $ and  the cycle containing $n$
is of length 1 but  $n$ is not a fixed point of  $\si'$.
\item  If $\alpha=n$, $\rho=1$ and
 $1 \not \in FIX (\si)$, then we delete   $\pi(1)$ from its cycle
  and create a new cycle $(\gamma n, \sigma(1))$ where
  $\gamma=\sign_{\si}(\sigma(1))$. In this case $\pi'(n)>m$.
\item  If $\alpha < n$ then we insert the letter $\rho n$ just before $\alpha$.
\end{enumerate}
\noindent To show that the mapping  $\Phi$ is a bijection we
construct its inverse as follows.
\begin{enumerate}
\item If
$\si' \in D_{\ell,n-2}^{m-1} $ then $\alpha=n$, $\rho=1$ and $\si$ is
obtained from
 $\si'$ by adding  1 to all letters and
creating the cycle (1).
\item  If $\si' \in D_{\ell,n}^m $ and the cycle containing
$n$ is of length 1, then let $\alpha=n$, $\rho=\sign_{\si'}(n)$
  and $\si$ is obtained from  $\si'$  by deleting the letter $\rho n$.
\item
If $\si'\in D_{\ell,n}^m $ and the  cycle containing
 $n$ is of length  2 with  $\pi'(n) > m$
 then let $\alpha=n$, $\rho=1$  and $\si$ is obtained from
  $\si'$  by deleting the letter $n$ and inserting the letter
   $\gamma \sigma'(n)$ just after  1 where $\gamma=\sign_{\pi'}(n)$.
\item In all other cases,  let
$\alpha=\sigma'(n)$, $\rho=\sign_{\si'}(n)$ and
 $\si$ is obtained from  $\si'$ by just deleting the letter $\rho n$.
\end{enumerate}

For $n=9; m=4; \ell=3$  we give some examples to
illustrate the above bijection.
\begin{itemize}
\item $\si'=(1 \bar{\bar{5}})(2)(3 \bar{6})(4\bar{7}) \in
D_{\ell,n-2}^{m-1}$ then $\alpha=9$, $\rho=1$ and
 $\si=(1)(2 \bar{\bar{6}})(3)(4 \bar{7})(5\bar{8}) $.

\item $\si'=(1 \bar{\bar{5}})(2\bar{8})(3 \bar{6})(4\bar{7})
(\bar{\bar{9}})\in D_{\ell,n}^{m}$ then $\alpha=9$,
 $\rho=\zeta^2$ and  $\si=(1 \bar{\bar{5}})(2\bar{8})(3 \bar{6})(4\bar{7})$.

 \item $\si'=(1 5)(2\bar{8})(3 )(4\bar{7})(\bar{\bar{9}}6)\in D_{\ell,n}^{m}$
  then $\alpha=9$, $\rho=1$ and  $\si=(1 \bar{\bar{6}}5)(2\bar{8})(3 )
  (4\bar{7})$.

\item $\si'=(1 \bar{\bar{5}})(2\bar{8})(\bar{6} )
(4)(\bar{\bar{9}}\bar{7}3)\in D_{\ell,n}^{m}$ then
 $\alpha=7$, $\rho=\zeta^2$ and  $\si=(1 \bar{\bar{5}})
 (2\bar{8})(\bar{6} )(4)(\bar{7}3)$.
\end{itemize}

\vspace{40pt}
{\bf Acknowledgments}:
This work was done during the visit of the first author
to Institut Camille Jordan (Universit\'{e} Lyon 1) in the fall of 2007.  This visit  was
supported by  a  scholarship of  AUF (Agence universitaire de la francophonie).
Both authors would like to thank the referee for his/her careful reading of a previous
version of this paper.


\begin{thebibliography}{99}
\bibitem{ar01}
Adin (R. M.) and Roichman (Y.), The flag major
index and group actions on polynomial rings, {\it European J. Combin.} {\bf 22} (2001), 431--446.

\bibitem{ABR}
Adin (R. M.), Brenti (F.) and Roichman (Y.),
Equi-distribution over descent classes of the hyperoctahedral group,
J. Combin. Theory Ser. A 113 (2006), no. 6, 917--933.
\bibitem{BB}  Bagno (E. ) and Biagioli (R.),
Colored-descent representations of complex reflection groups $G(r,p,n)$,
 Israel J. Math.  160  (2007), 317--347.
\bibitem{BG}
Bagno (E.) and  Garber (D.),
On the excedance number of colored permutation groups,
S\'em. Lothar. Combin. 53 (2004/06), Art. B53f, 17 pp. (electronic).

\bibitem{BR}
Briggs (K.) and Remmel (J.),
$m$-rook numbers and a generalization of a
formula of Frobenius to $C\sb m\wr S\sb n$,
J. Combin. Theory Ser. A 113 (2006), no. 6, 1138--1171.
\bibitem{Co}
Comtet (L.), \emph{Analyse combinatoire}, Tome second, Collection SUP,
Presses universitaires de France, 1970.

\bibitem{CZ} Chen (W.) and Zhang (J.),
The skew and relative  derangments of type B, Electronic J. Combin.,
14 (2007), \#N24.
\bibitem{CHZ} Clarke (R),  Han (G.N.) and Zeng (J.), A combinatorial
 interpretation of the Seidel  generation of $q$-derangement numbers,
Ann. Comb. 1 (1997), no. 4, 313--327.
\bibitem{Ch} Chow (C.), On derangement polynomials of type $B$,
S\'em. Lothar. Combin. 55 (2005/06), Art. B55b, 6 pp.
(electronic).
\bibitem{CG} Chow (C.) and Gessel (I.),
On the descent numbers and major indices for the hyperoctahedral group,
Adv. in Appl. Math. 38 (2007), no. 3, 275--301.
\bibitem{DR} Dumont  (D.) et   Randrianarivony (A.),
 D\'erangements et nombres de Genocchi, Discrete Math. 132 (1994),
 no. 1-3, 37--49.
\bibitem{FH} Foata (D.) and Han (G.N.),
 Signed words and permutations, IV;
fixed and pixed points, preprint, to appear in Israel J. Math., 2006.
\bibitem{FS} Foata (D.) and Sch\"utzenberger (M. P.): \emph{Th\'eorie
g\'eom\'etrique des polyn\^omes eul\'eriens}, Lecture Notes in
Mathematics,   Springer-Verlag, vol. 138, 1970.
\bibitem{HLR}
Haglund (J.),  Loehr (N.) and  Remmel (J. B.),
Statistics on wreath products, perfect matchings, and signed words,
European J. Combin. 26 (2005), no. 6, 835--868.
 \bibitem{Ra1} Rakotondrajao (F.),
 $k$-fixed-points-permutations, \emph{Integers,
 electronic journal of combinatorial number theory}, 7 (2007), \#A36.
 \bibitem{Ra2} Rakotondrajao (F.),
On Euler's difference table, Proc. of FPSAC'07, Tianjin, 2007.
 \bibitem{Ri}
Riordan (J.), \emph{Introduction to combinatorial analysis},
originally published by Wiley, New York, 1958, and
reprinted by
Dover Publications, Inc., Mineola, NY, 2002
 \bibitem{St}
Stanley (R.), \emph{Enumerative Combinatorics}, Vol. 1,
Cambridge Studies in Advanced Math. 49, 1997.
\end{thebibliography}
\end{document}